\theoremstyle{plain}
\newtheorem{thm}{Theorem}
\newtheorem{prop}[thm]{Proposition}
\newtheorem{lem}[thm]{Lemma}
\newtheorem{cor}[thm]{Corollary}
\theoremstyle{definition}
\newtheorem{defn}[thm]{Definition}
\theoremstyle{remark}
\newtheorem{rmk}[thm]{Remark}
\gdef\GitPageFooter{$Format: v:\texttt{
\endgroup

\begin{document}
\title{A commuting-vector-field approach to some dispersive estimates}
\author[WWY Wong]{Willie Wai Yeung Wong}
\address{Michigan State University}
\email{wongwwy@member.ams.org}
\subjclass[2010]{35Q41, 35Q83, 35B45}

\begin{abstract}
We prove the pointwise decay of solutions to three linear equations: (i) the transport equation in phase space generalizing the classical Vlasov equation, (ii) the linear Schr\"odinger equation, (iii) the Airy (linear KdV) equation. 
The usual proofs use explicit representation formulae, and either obtain $L^1$---$L^\infty$ decay through directly estimating the fundamental solution in physical space, or by studying oscillatory integrals coming from the representation in Fourier space. 
Our proof instead combines ``vector field'' commutators that capture the inherent symmetries of the relevant equations with conservation laws for mass and energy to get space-time weighted energy estimates. 
Combined with a simple version of Sobolev's inequality this gives pointwise decay as desired.   
In the case of the Vlasov and Schr\"odinger equations we can recover sharp pointwise decay; in the Schr\"odinger case we also show how to obtain local energy decay as well as Strichartz-type estimates. 
For the Airy equation we obtain a local energy decay that is almost sharp from the scaling point of view, but nonetheless misses the classical estimates by a gap. 
This work is inspired by the work of Klainerman on $L^2$---$L^\infty$ decay of wave equations, as well as the recent work of Fajman, Joudioux, and Smulevici on decay of mass distributions for the relativistic Vlasov equation.
\end{abstract}

\maketitle

\section{Introduction}

This paper concerns pointwise decay estimates for dispersive partial differential equations. 
At the heart of the matter, we are interested in a classical field theory where the field strength measures the number density of the constituent ``particles''. 
That the equations of motion are ``dispersive'' indicates that individual ``particles'' tend to have disparate velocities, and as a result, will travel apart over time. 
As physically the total number of the particles are expected to be conserved, that the spatial support is spreading out in time suggests that the number density decreases in time. 
To realize this intuition, the classical proofs typically are based on analyses of the explicit representation formulae tying the field strengths at time $t$ to the field strengths at some initial time $t_0$.
The goal of this paper is to offer an alternative proof of some well-known dispersive inequalities using a method that bypasses the explicit representation formulae.  

This paper will focus on three examples, the classical (non-relativistic) Vlasov equation, the linear Schr\"odinger equation, and the Airy (linear Korteweg--de Vries) equation. The latter two will be introduced in Sections \ref{sec:sch} and \ref{sec:airy} respectively. We introduce the Vlasov equation here for illustration. The classical Vlasov equation is a simple linear transport equation on classical phase space. The field is the number density of a particle (say a gas) on the classical phase space $\mathbb{R}^d\times \mathbb{R}^d$. We use the coordinates $(q_1, \ldots, q_d, p_1, \ldots p_d)$; the first factor of $\mathbb{R}^d$ represents the position and the second factor the velocity. 
The (time-dependent) number density is given as 
\begin{equation}
\nu : \mathbb{R}\times\mathbb{R}^d\times\mathbb{R}^d \to [0,\infty)
\end{equation}
and we assume that the individual particles are non-interacting and hence follow Newton's first law
\begin{equation}\label{eq:vlasov}
\partial_t \nu + \underbrace{\sum_{i = 1}^d p_i \partial_{q_i} \nu}_{p\cdot \partial_q \nu} = 0.
\end{equation}
Equation \eqref{eq:vlasov} is sometimes called the classical Vlasov equation, and has an explicit solution of its initial value problem by the formula
\begin{equation}\label{eq:vlasovsol}
\nu(t,q,p) = \nu(0, q - tp, p).
\end{equation}
Using this formula, we can prove the following standard \emph{dispersive estimate}.
\begin{prop}\label{prop:vlasovdecay}
	Let $\overline{\nu}(t,q) := \int_{\mathbb{R}^d} \nu(t,q,p)~\mathrm{d}p$. If $\nu$ solves \eqref{eq:vlasov}, is smooth, and $\nu(0,q,p)$ decays suitably as $|p|,|q|\to\infty$, then 
	\begin{equation}
		\sup_{q\in\mathbb{R}^d} \overline{\nu}(t,q) \lesssim \langle t\rangle^{-d}.
	\end{equation}
	(The notation $\langle t\rangle := \sqrt{1 + t^2}$ will be in use throughout.)
\end{prop}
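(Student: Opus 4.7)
The plan is to bypass \eqref{eq:vlasovsol} using commuting vector fields and a one-line Sobolev embedding. The Vlasov operator $\partial_t+p\cdot\partial_q$ is invariant under Galilean boosts, whose infinitesimal generators are
\[
B_i := t\partial_{q_i}+\partial_{p_i},\qquad i=1,\ldots,d.
\]
A short computation gives $[B_i,\partial_t+p\cdot\partial_q]=0$ and $[B_i,B_j]=0$. In particular, if $\nu$ solves \eqref{eq:vlasov}, so does $B^\alpha\nu$ for every multi-index $\alpha$. Because a linear transport equation preserves every $L^r_{q,p}$ norm, $\|B^\alpha\nu(t,\cdot,\cdot)\|_{L^1_{q,p}}$ is independent of $t$; and since $B_i|_{t=0}=\partial_{p_i}$, this conserved quantity equals the initial $\|\partial_p^\alpha\nu(0,\cdot,\cdot)\|_{L^1_{q,p}}$, which is finite by hypothesis.

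For $t>0$ the identity $\partial_{q_i}=t^{-1}(B_i-\partial_{p_i})$ trades one spatial derivative for a factor of $t^{-1}$ together with a (harmless) $p$-derivative. Specializing to the multi-index $(1,1,\ldots,1)$ and expanding,
\[
\partial_{q_1}\cdots\partial_{q_d}\nu
= t^{-d}\sum_{S\subseteq\{1,\ldots,d\}}(-1)^{|S^c|}\Bigl(\prod_{i\in S}B_i\Bigr)\Bigl(\prod_{j\in S^c}\partial_{p_j}\Bigr)\nu,
\]
all factors commuting. Integrating in $p$, every term with $S\ne\{1,\ldots,d\}$ contains a total $p$-derivative and vanishes by the decay of $\nu$, so
\[
\partial_{q_1}\cdots\partial_{q_d}\overline\nu(t,q)
=t^{-d}\int_{\mathbb{R}^d}B_1\cdots B_d\,\nu(t,q,p)\,\mathrm{d}p.
\]

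To close, I invoke the elementary inequality $\|f\|_{L^\infty(\mathbb{R}^d)}\le\|\partial_{q_1}\cdots\partial_{q_d}f\|_{L^1(\mathbb{R}^d)}$, which is immediate from writing $f$ as an iterated one-dimensional integral of its pure mixed derivative. Combining it with the identity above and with the conservation law yields, for all $t>0$,
\[
\sup_{q}\overline\nu(t,q)\le t^{-d}\|B_1\cdots B_d\,\nu(t,\cdot,\cdot)\|_{L^1_{q,p}}
=t^{-d}\|\partial_{p_1}\cdots\partial_{p_d}\nu(0,\cdot,\cdot)\|_{L^1_{q,p}}.
\]
The trivial bound $\sup_q\overline\nu(t,q)\le\|\nu(0,\cdot,\cdot)\|_{L^\infty_qL^1_p}$, likewise invariant under the pure $q$-transport, handles $t\le 1$, and the two regimes combine to give $\sup_q\overline\nu\lesssim\langle t\rangle^{-d}$.

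The only conceptual step is identifying the Galilean boost fields $B_i$; the rest is a commutator check, integration by parts in $p$, and a trivial Sobolev embedding. The price paid relative to the representation-formula argument is the assumption that the initial datum has $d$ extra $p$-derivatives in $L^1_{q,p}$. I expect the main obstacle in the Schr\"odinger and Airy analogues to be that the conserved norm is now $L^2$ rather than $L^1$, so a naive repetition would lose a factor of $t^{d/2}$; additional structure will be required to recover the sharp rate.
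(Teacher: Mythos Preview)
Your argument is correct, but it is not the proof the paper gives immediately after the proposition---it is, essentially verbatim, the \emph{alternative} vector-field proof the paper develops in Section~2 (your $B_i$ are the paper's $W_i$, and your two steps are Lemmas~\ref{lem:ptconslaw} and the ``Klainerman--Sobolev'' lemma there). The paper's direct proof instead uses the representation formula \eqref{eq:vlasovsol}: it reads $\overline\nu(t,q)$ as an integral of $\nu(0,\cdot)$ over the tilted $d$-plane $\Pi(t,q)=\{(q-tp,p)\}$, extracts the Jacobian factor $\langle t\rangle^{-d}$ from the change of variables, and bounds the remaining hyperplane integral uniformly via Gagliardo's trace theorem, landing on the same $W^{d,1}$ norm of the data. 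One small point of care in your small-time step: the quantity $\|\nu\|_{L^\infty_qL^1_p}$ is exactly $\sup_q\overline\nu$ and is \emph{not} conserved; what is conserved by the slice-wise $q$-transport is $\|\nu\|_{L^1_pL^\infty_q}=\int\sup_q|\nu(t,q,p)|\,\mathrm{d}p$, which dominates it---the paper instead handles $|t|\le1$ by using $\partial_{q_i}$ as commuting fields and repeating the same $L^1$-Sobolev step. Your closing remark about the $L^1\to L^2$ obstruction for Schr\"odinger and Airy anticipates precisely what the paper does next.
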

\begin{proof}
	By \eqref{eq:vlasovsol} we can write
	\[ \overline{\nu}(t,q) = \int_{\mathbb{R}^d} \nu(0,q - tp, p)~\mathrm{d}p.\]
	The integral on the right is over the $d$-dimensional hyperplane $\Pi(t,q) := \{ (q - tp,p) : p\in \mathbb{R}^d\}$ of $\mathbb{R}^d\times\mathbb{R}^d$. In terms of the induced hyperplane measure $\mathrm{d}\sigma$ on $\Pi(t,q)$, we see that the change of variables gives
	\[ \overline{\nu}(t,q) = \langle t\rangle^{-d} \int_{\Pi(t,q)} \nu(0, \text{---}) ~\mathrm{d}\sigma \]
	and hence the assertion is proved with the implicit constant 
	\[ \sup_{t,q} \int_{\Pi(t,q)} \nu(0,\text{---}) ~\mathrm{d}\sigma \]
	which can be bounded by $\|\nu(0,\text{---})\|_{W^{d,1}(\mathbb{R}^{2d})}$ by Gagliardo's Sobolev trace theorem. 
\end{proof}
The proof above captures many features of the representation-formula-based proofs of dispersive inequalities: the object to be controlled is written in terms of an explicit integral operator acting on the initial data, and the decay is read off from $L^1$---$L^\infty$ type bounds on the integral operator, with the asymptotics read off of the homogeneity properties of the integral operator (in other words, a change of variables). 
The analogous proofs (using the fundamental solution) for the Schr\"odinger and Airy equations can be found in Chapter 8 of Stein and Shakarchi \cite{PrinAnal4}.  The properties of the solution operators that are used in the course of proving the decay estimates can be derived from powerful oscillatory integral estimates from modern Fourier analysis. 

An alternative method for deriving dispersive decay estimates was found by Klainerman for the wave equation \cite{KlainermanSobolev}. Taking advantage of the Lorentz invariance of the wave equation, Klainerman observed that if the vector field $\Omega$ is a generator for the Poincar\'e group, and $u$ a solution to the wave equation, then $\Omega u$ is also a solution to the wave equation. From this the energy conservation of the wave equation implies certain space-time weighted energy inequalities for higher derivatives of $u$; and this, via a version of the Sobolev inequality, gives space-time weighted control of the $L^\infty$ norms of the solutions which is the pointwise decay estimates that we seek. 
More recently, Fajman, Joudioux, and Smulevici observed that by properly lifting the symmetry actions to the relativistic phase space, a similar argument can yield the dispersive decay for the \emph{relativistic} counterpart to the Vlasov equation \cite{VectorfieldVlasov}. 
The argument has been modified by Smulevici to apply to the classical Vlasov equation and was used to show small data global existence for the Vlasov-Poisson equations \cite{JacquesVlasov}.

Both the commuting vector field method and the traditional oscillatory integral approach for deriving dispersive estimates have many successes in their applications. Their relative merits have been explored in the literature (see, e.g.\ \cites{VectorfieldVlasov, KlainermanSobolev, KlainermanCommuting}) and we shall not discuss them here. 
In terms of the aim of providing a robust proof of dispersive inequalities that relies primarily on physical space methods (and avoids the use of the Fourier transform), there are also other previous works on bilinear estimates \cites{PlanchonVega, TaoUnp, PhysSpace}. 
The goal of the present article is to firstly demonstrate the feasibility of (re)deriving the analogues of certain classical dispersive estimates; secondly connect the commuting vector fields systematically to the symmetries of the equations; and thirdly relate the vector field commutators to the Fourier representation of the solutions, in the context of the three sample equations announced above. 
In our context, our equations exhibit symmetry properties that are Galilean or Galilean-like (in the sense that ``space'' and ``time'' are not on equal footing, as is in the case of Lorentzian symmetries). This makes decay estimates adapted to the standard $t$ foliation more obviously compatible with the vector field method; in the relativistic case one may argue that the estimates are more adapted to hyperboloidal foliations (see e.g.\ \cite{KlainermanKG}; and also \cites{QianPP, Hyperboloidal} for some recent developments). 
We fully exploit this compatibility for our relatively short proofs given below.

\section{Classical transport equations in phase space}

The classical Vlasov equation \eqref{eq:vlasov} is a special case of the more general class of transport equations on phase space.  
Let $\nu$ again denote the time-dependent number density on classical phase space $\mathbb{R}^d\times\mathbb{R}^d$. 
We let $w:\mathbb{R}^d\to\mathbb{R}^d$ be a smooth map and consider the following linear transport equation
\begin{equation}\label{eq:phasetransport}
	\partial_t \nu + w(p) \cdot \partial_q\nu = 0.
\end{equation}
The classical Vlasov equation \eqref{eq:vlasov} is simply \eqref{eq:phasetransport} with $w$ being the identity function. 

Before treating \eqref{eq:phasetransport} more generally, let us focus first on the case of the classical Vlasov equation. 
This case has been previously treated by Smulevici \cite{JacquesVlasov}, we include the discussion here to set the stage for the general case, and to showcase how the analysis simplifies due to the Galilean (instead of Lorentzian) symmetry of the problem. 
The $t$-weights in the weighted energy estimates that drive both the temporal decay for the linear wave equation in the original Klainerman-Sobolev estimate \cite{KlainermanSobolev} and the analogue for the relativistic Vlasov equation are derived from the Lorentz-boost vector fields. 
Here, for the classical Vlasov equation, we will instead take advantage of the \emph{Galilean boosts}: 
if $\nu$ solves \eqref{eq:vlasov}, then so does the function 
\[ (t,q,p) \mapsto \nu(t, q + t p_0, p + p_0)\]
for any $p_0\in \mathbb{R}^d$. The corresponding infinitesimal generators of these symmetries are given by the vector fields $W_i := t \partial_{q_i} + \partial_{p_i}$, where $i\in \{1, \ldots, d\}$. That is to say, if $\nu$ is a solution to \eqref{eq:vlasov} then so is $W_i \nu$. We see that $W_i$ has an obvious $t$-weight; this is the factor that will drive the decay for large times. 
The dispersive estimate of Proposition \ref{prop:vlasovdecay} then follows from the following two Lemmas. 

\begin{lem}[Conservation laws] \label{lem:ptconslaw}
	If $\nu$ solves \eqref{eq:phasetransport}, and $F:\mathbb{R}^d \times \mathbb{R} \to \mathbb{R}$, then 
	\[ \iint_{\mathbb{R}^d\times\mathbb{R}^d} F(p, \nu(t,q,p)) ~\mathrm{d}p~\mathrm{d}q \]
	is constant in time when it is well-defined. 
\end{lem}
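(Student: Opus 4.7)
The plan is to differentiate under the integral sign and exploit the fact that for each fixed $p$, equation \eqref{eq:phasetransport} is a pure transport in $q$ with constant (in $q$) velocity $w(p)$, so its flow is measure-preserving on the $q$-slice.

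First I would observe that by the chain rule, if $\nu$ solves \eqref{eq:phasetransport}, then for any smooth $F:\mathbb{R}^d\times\mathbb{R}\to\mathbb{R}$ the composition $G(t,q,p) := F(p,\nu(t,q,p))$ solves the same transport equation in the variables $(t,q)$:
\[
\partial_t G + w(p)\cdot\partial_q G \;=\; F_\nu(p,\nu)\bigl(\partial_t \nu + w(p)\cdot\partial_q\nu\bigr) \;=\; 0,
\]
because $F_p$ contributes nothing when differentiating in $t$ or $q$. (A bit of care is needed when $F$ is only continuous in its second argument, but one can first prove the lemma for smooth $F$ and then pass to limits; or equivalently, use the flow formulation below.)

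Next I would differentiate the integral with respect to $t$, bring the derivative inside (justified by the decay hypothesis implicit in ``well-defined''), and substitute $\partial_t G = -w(p)\cdot\partial_q G$. Since $w(p)$ does not depend on $q$, the right-hand side is the $q$-divergence of the vector field $w(p) G$ on $\mathbb{R}^d_q$, and its integral over $\mathbb{R}^d_q$ vanishes by the divergence theorem (assuming sufficient decay of $\nu$, hence of $G$, as $|q|\to\infty$). Then integrating over $p$ yields $\frac{d}{dt}\iint G \,\mathrm{d}p\,\mathrm{d}q = 0$.

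An equivalent, perhaps slicker presentation avoids any regularity issue with $F$: the method of characteristics gives $\nu(t,q,p) = \nu(0, q - tw(p), p)$, so the change of variables $q \mapsto q - tw(p)$ (with unit Jacobian, for each fixed $p$) shows
\[
\int_{\mathbb{R}^d} F(p,\nu(t,q,p))\,\mathrm{d}q \;=\; \int_{\mathbb{R}^d} F(p,\nu(0,q,p))\,\mathrm{d}q,
\]
and then integration in $p$ yields the claim. The only genuine obstacle is the standing integrability hypothesis — one needs $F(p,\nu(t,\cdot,\cdot))\in L^1(\mathbb{R}^{2d})$ for all relevant $t$ to justify both the Fubini exchange and the differentiation under the integral; but this is exactly the content of ``when it is well-defined.''
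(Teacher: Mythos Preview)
Your argument is correct and matches the paper's sketch essentially line for line: the paper also observes that $F(p,\nu)$ solves \eqref{eq:phasetransport} (by the chain rule in the second slot), writes the equation in divergence form, and invokes the divergence theorem together with mollification for non-smooth $F$. Your alternative via the explicit characteristics $\nu(t,q,p)=\nu(0,q-tw(p),p)$ and the unit-Jacobian change of variables is a nice bonus that the paper does not include, but it is a minor variant rather than a genuinely different route.
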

\begin{proof}[Sketch of proof]
	When $F$ is differentiable in the second factor, then $F(p,\nu)$ is a classical solution also to \eqref{eq:phasetransport} which is a conservation law in divergence form. Provided $F(p,\nu)$ decays suitably at infinity the spatial integral $\int_{\mathbb{R}^d} w(p)\cdot \partial_q F(p,\nu) ~\mathrm{d}q$ vanishes by the divergence theorem, and the conservation law holds. For more general $F$ we approximate by mollified versions. 
\end{proof}

\begin{lem}[``Klainerman-Sobolev'' for classical Vlasov]
	If $\nu$ solves \eqref{eq:vlasov}, then 
	\[ |t|^d \|\overline{\nu}(t,\text{---})\|_{L^\infty(\mathbb{R}^d)} \leq \iint_{\mathbb{R}^d\times\mathbb{R}^d} \left| W_1 W_2 \cdots W_d \nu(t,q,p)\right| ~\mathrm{d}p~\mathrm{d}q.\]
\end{lem}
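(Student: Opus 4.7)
The plan is to exploit the fact that the vector fields $W_i = t\partial_{q_i} + \partial_{p_i}$ are nothing other than coordinate derivatives once we change to the characteristic coordinates $(Q,P) = (q - tp, p)$. First I would verify the chain rule computation: from $\partial_Q = \partial_q$ and $\partial_P = t\partial_q + \partial_p$, we see that $W_i = \partial_{P_i}$ in the $(Q,P)$ coordinates. Writing $\tilde\nu(t,Q,P) := \nu(t, Q+tP, P)$, the Jacobian of $(q,p) \mapsto (Q,P)$ is $1$, so the right-hand side equals
\[ \iint_{\mathbb{R}^d\times\mathbb{R}^d} \left| \partial_{P_1}\partial_{P_2}\cdots\partial_{P_d} \tilde\nu(t,Q,P)\right| \,\mathrm{d}P\,\mathrm{d}Q. \]

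Next I would apply a standard one-sided fundamental-theorem-of-calculus argument in the $P$ variables. Assuming enough decay of $\tilde\nu$ as $|P|\to\infty$, iterating the representation $\tilde\nu(t,Q,P) = \int_{-\infty}^{P_1}\!\!\cdots\!\int_{-\infty}^{P_d} \partial_{P_1}\cdots\partial_{P_d}\tilde\nu \,\mathrm{d}P'$ gives
\[ \sup_{P\in\mathbb{R}^d} |\tilde\nu(t,Q,P)| \leq \int_{\mathbb{R}^d} |\partial_{P_1}\cdots\partial_{P_d} \tilde\nu(t,Q,P')|\,\mathrm{d}P'. \]

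The remaining step is to recognize that the $|t|^d$ factor on the left of the desired inequality comes precisely from the Jacobian of a second change of variables used to evaluate $\overline{\nu}$. At fixed $q$, using $\nu(t,q,p) = \tilde\nu(t, q-tp, p)$, I would substitute $R = q - tp$ in the defining integral so that $\mathrm{d}p = |t|^{-d}\,\mathrm{d}R$ (for $t\neq 0$), yielding
\[ |t|^d \, \overline{\nu}(t,q) = \int_{\mathbb{R}^d} \tilde\nu\bigl(t, R, (q-R)/t\bigr)\,\mathrm{d}R. \]
Taking absolute values, bounding the integrand by $\sup_P |\tilde\nu(t,R,P)|$, applying the Sobolev-type bound from the previous step, and then taking the supremum in $q$ produces the claim. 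The case $t=0$ is trivial (or the estimate is vacuous as stated).

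The only real subtlety is justifying the boundary vanishings needed for the fundamental-theorem representation; this is where the implicit assumption of ``suitable decay'' of $\nu$ as $|p|,|q|\to\infty$ is used, and it can be supplied either by an approximation/truncation argument or by assuming enough regularity and spatial decay on the initial data, which are transported by the flow. Once this is in place the argument is a routine change of variables plus the one-dimensional Sobolev inequality.
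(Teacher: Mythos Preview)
Your argument is correct, but it proceeds differently from the paper. The paper applies the fundamental theorem of calculus in the \emph{position} variables $q$: writing $\overline{\nu}(t,q) = \int_{Q(q)} \partial_{q_1}\cdots\partial_{q_d}\overline{\nu}\,\mathrm{d}q'$, it then observes that $\int_{\mathbb{R}^d}\partial_{p_i}\nu\,\mathrm{d}p = 0$, so that after integrating in $p$ the operator $W_i$ acts simply as $t\partial_{q_i}$. This replaces $t^d\partial_{q_1}\cdots\partial_{q_d}$ by $W_1\cdots W_d$ under the $p$-integral, and the estimate follows. You instead pass to the characteristic coordinates $(Q,P)=(q-tp,p)$, apply the fundamental theorem in the $P$ variables (where $W_i=\partial_{P_i}$), and extract the $|t|^d$ from the Jacobian of the substitution $p\mapsto R=q-tp$ in the integral defining $\overline{\nu}$. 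Your route makes the geometric meaning of the $W_i$ as straightened coordinate fields very explicit, at the cost of a change of variables that is singular at $t=0$ and must be handled separately. The paper's route stays in the original coordinates throughout and is the version that extends cleanly to the degenerate-dispersion setting treated later (where one cannot globally straighten the $W_i$).
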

\begin{proof}
	Writing $Q(q) := (-\infty,q_1)\times(-\infty,q_2)\times\cdots\times (-\infty,q_d)$ for the orthant below $q$, the fundamental theorem of calculus, applied to $\nu$ which we assume to decay suitably at infinity, yields 
	\[ \overline{\nu}(t,q) = \int_{Q(q)} \partial_{q_1}\partial_{q_2}\cdots\partial_{q_d}\overline{\nu}(t,q') ~\mathrm{d}q'.\]
	Next, observing that if $\nu$ decays suitably at infinity, 
	\[ \int_{\mathbb{R}^d} \partial_{p_i} \nu(t,q,p')~\mathrm{d}p' = 0.\]
	This implies that
	\[ t^d \overline{\nu}(t,q) = \int_{Q(q)} \int_{\mathbb{R}^d} W_1 \cdots W_d \nu(t,q',p') ~\mathrm{d}p'~\mathrm{d}q'\]
	and the lemma follows. 
\end{proof}

Putting together the two lemmas, we have that 
\begin{multline*} 
	|t|^d \|\overline{\nu}(t,\text{---})\|_{L^\infty(\mathbb{R}^d)} \leq \iint_{\mathbb{R}^d\times\mathbb{R}^d} \left| W_1\cdots W_d \nu(0,q,p)\right| ~\mathrm{d}p~\mathrm{d}q \\
	= \iint_{\mathbb{R}^d\times\mathbb{R}^d} |\partial_{p_1}\cdots\partial_{p_d} \nu(0,q,p)|~\mathrm{d}p~\mathrm{d}q \leq \|\nu(0,\text{---})\|_{W^{d,1}(\mathbb{R}^{2d})}.
\end{multline*}
This gives Proposition \ref{prop:vlasovdecay} for $|t| \geq 1$. For $|t| \leq 1$ we use that spatial translations $\partial_{q_i}$ are also Galilean symmetries, and hence 
\[ \|\overline{\nu}(t,\text{---})\|_{L^\infty(\mathbb{R}^d)} \leq \iint_{\mathbb{R}^d \times\mathbb{R}^d} \left| \partial_{q_1}\cdots\partial_{q_d} \nu(t,q,p)\right| ~\mathrm{d}p~\mathrm{d}q \]
with the right hand side being a conserved quantity controlled also by $\|\nu(0,\text{---})\|_{W^{d,1}(\mathbb{R}^{2d})}$.

\begin{rmk}
	This argument illustrates the basic structure of a proof using the commuting vector field method. Using the symmetries of the equation one gets space-time weighted integral conservation laws. This conservation law is converted into a point-wise decay estimate by way of a Sobolev inequality.
\end{rmk}

Returning to the more general case \eqref{eq:phasetransport}, we see immediately that if the Jacobian matrix of the mapping $w$ is nonsingular, then we can invert the mapping and consider $\nu$ as a function of $(t,q,w)$. In this case \eqref{eq:phasetransport} reduces to the classical Vlasov equation and the above arguments go through \emph{mutatis mutandis} giving $\langle t\rangle^{-d}$ decay for the solutions. 
If we let $w(p) = p / \sqrt{1 + |p|^2}$ for example, the equation \eqref{eq:phasetransport} becomes the relativistic Vlasov equation on Minkowski space. So this gives an alternative proof for its dispersive decay. (Note that the velocity integral for $\overline{\nu}$ would also have to be suitably modified; see the much more exhaustive treatment in \cite{VectorfieldVlasov}.)

The situation becomes somewhat more interesting when $w$ has critical points. We start with an example. 
\begin{prop}
	Let $d = 1$ and $w = p^2$, then for every $C, \epsilon > 0$, there exists a smooth solution $\nu$ of \eqref{eq:phasetransport} and a large time $T$ such that
	 \[ \overline{\nu}(T,0) \geq C \langle T\rangle^{-\epsilon} \|\nu(0,\text{---})\|_{W^{1,1}(\mathbb{R}^2)}.\]
\end{prop}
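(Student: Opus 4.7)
The plan is to use the method of characteristics to write the solution of $\partial_t \nu + p^2 \partial_q \nu = 0$ explicitly as $\nu(t, q, p) = \nu_0(q - tp^2, p)$, and then exploit the degeneracy of $w(p) = p^2$ at $p = 0$: a particle with velocity $p$ travels only a distance $tp^2$ in time $t$, which is small whenever $p$ is small. So initial data concentrated near $(q, p) = (0, 0)$ should produce a solution for which $\overline{\nu}(t, 0)$ remains bounded below long after the time $\sim 1$ at which the sharp $\langle t\rangle^{-1}$ decay of Proposition \ref{prop:vlasovdecay} would already have set in.

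The concrete construction I would use is the one-parameter family
\[ \nu_0^\delta(q, p) := \frac{1}{\delta}\, \phi\!\left(\frac{q}{\delta}\right) \psi\!\left(\frac{p}{\delta}\right), \qquad \delta \in (0, 1], \]
where $\phi, \psi \in C_c^\infty(\mathbb{R})$ are fixed nonnegative bumps with $\phi(0) > 0$ and $\psi \not\equiv 0$. The first and key step is to verify that $\|\nu_0^\delta\|_{W^{1,1}(\mathbb{R}^2)}$ is bounded by a constant $M$ independent of $\delta$. The pre-factor $\delta^{-1}$ would ordinarily inflate every $L^1$ norm, but it is exactly balanced by the factor $\delta$ coming from the $q$-integration (since the $q$-support has width $\sim \delta$); even the most singular term $\|\partial_p \nu_0^\delta\|_{L^1}$ comes out to $\|\phi\|_{L^1}\|\psi'\|_{L^1}$. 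Locating the correct joint scaling --- a single scale $\delta$ in both $q$ and $p$ with amplitude $\delta^{-1}$ --- is really the only delicate point of the argument.

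With that in hand, the substitution $u = p/\delta$ gives
\[ \overline{\nu}^\delta(t, 0) = \int_{\mathbb{R}} \phi(-t\delta u^2)\, \psi(u)\, du. \]
Setting $T_\delta := 1/(K\delta)$ for a large but fixed constant $K$, the argument of $\phi$ lies in $[-1/K, 0]$ on the support of $\psi$, so continuity of $\phi$ at $0$ yields $\overline{\nu}^\delta(T_\delta, 0) \geq c_0 > 0$ uniformly in $\delta$, provided $K$ is chosen large enough at the outset. Combining,
\[ \frac{\overline{\nu}^\delta(T_\delta, 0)\, \langle T_\delta\rangle^\epsilon}{\|\nu_0^\delta\|_{W^{1,1}}} \;\geq\; \frac{c_0}{M}\, \langle 1/(K\delta)\rangle^\epsilon \;\longrightarrow\; \infty \quad \text{as } \delta \to 0, \]
so given any $C, \epsilon > 0$, one first fixes $K$ large enough to secure $c_0$ and then picks $\delta > 0$ small enough that the right-hand side exceeds $C$. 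The resulting $\nu$ and $T = T_\delta$ are the required solution.
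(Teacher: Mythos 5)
Your proof is correct and is essentially the paper's own argument: both use initial data concentrated at scale $\delta$ (the paper's $\lambda^{-1}$) in $q$ and $p$ with amplitude $\delta^{-1}$, so that the $W^{1,1}$ norm stays bounded while $\overline{\nu}(T,0)$ remains of order one out to times $T\sim\delta^{-1}$ because the degenerate dispersion $w(p)=p^2$ barely moves the mass near $p=0$. The only cosmetic differences are your product bump versus the paper's radial bump and your choice $T=1/(K\delta)$ versus the paper's $T=\lambda$.
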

\begin{proof}
	Fix $\phi$ to be a radial bump function on $\mathbb{R}^2$ that is identically $1$ in the unit disc and vanishes outside the disc of radius 2. Denote by $\phi_\lambda(q,p) = \lambda \phi(\lambda q,\lambda p)$; for all $\lambda \geq 1$ we have, by scaling, that $\|\phi_\lambda\|_{W^{1,1}(\mathbb{R}^2)}$ is uniformly bounded by some constant $C'$. Let $\nu(t,q,p) = \phi_\lambda(q - tp^2, p)$; this solves \eqref{eq:phasetransport}. We have the following lower bound
	\[ \overline{\nu}(t,0) \geq \lambda \int_{\{p^2 + t^2 p^4 < \lambda^{-2}\}} ~\mathrm{d}p = \lambda \sqrt{ \frac{\sqrt{ 4 \lambda^{-2} t^2 + 1} - 1}{2 t^2}}.\]
	This implies, in particular, that 
	\[ \overline{\nu}(\lambda,0) \geq \sqrt{ \frac{\sqrt{5} - 1}{2}}.\]
	So choosing $T = \lambda$ sufficiently large such that for our given $C$ and $\epsilon$ the inequality
	\[ C C' \langle T\rangle^{-\epsilon} \leq \frac12\]
	holds we obtain the desired counterexample. 
\end{proof}

\begin{rmk}
	The equation 
	\[ \partial_t \nu + p^2 \partial_q \nu = 0 \]
	considered in the previous proposition is the classical-phase-space-transport analogue of the Airy equation 
	\[ \partial_t \phi - \partial^3_{xxx} \phi = 0.\]
	This latter equation has a well-known $L^1$---$L^\infty$ decay estimate with a $\langle t\rangle^{-1/3}$ rate \cite{PrinAnal4}*{Chapter 8}. Here we see a difference between the \emph{classical} and the \emph{quantum} pictures: in the latter a critical point of the dispersive relation $w$ gives a reduced rate of decay, in the former such critical points invalidate the decay estimates entirely. This difference can be understood in part by the Heisenberg uncertainty principle for quantum systems, which disallows initial data like $\phi_\lambda$ which concentrates both in physical and frequency space. 
\end{rmk}

Let us now return to the general equation \eqref{eq:phasetransport} in arbitrary spatial dimensions $d$. First, we notice that spatial translation remain a symmetry of these equations. Therefore in the spirit of vector field method we observe that for any multiindex $\alpha$, we have that, by Lemma \ref{lem:ptconslaw}, the integral
\[ \int_{\mathbb{R}^d\times\mathbb{R}^d} \left| \partial^\alpha_q \nu(t,q,p) \right| ~\mathrm{d}p~\mathrm{d}q \]
is conserved in time. Therefore by the Sobolev inequality we have \emph{boundedness} of solutions to \eqref{eq:phasetransport}:
\begin{equation}
	\overline{\nu}(t,q) \leq \|\nu(0,\text{---})\|_{W^{d,1}(\mathbb{R}^{2d})}.
\end{equation}
This can be upgraded to partial decay provided the singularity in the dispersion relation is not too bad. An example is the following.
\begin{defn}
	We say that the mapping $w:\mathbb{R}^d\to\mathbb{R}^d$ \emph{has rank at least $k$} if there exists a locally finite cover of $\mathbb{R}^d$ by open sets $U_\alpha$ such that for every $\alpha$, there exists a matrix valued function $B_\alpha:U_\alpha \to \mathbb{M}^{d\times d}$ and a $k$-dimensional subspace $V_\alpha$ of $\mathbb{R}^d$ such that at every point $p\in U_\alpha$, the matrix product $B_\alpha\cdot \partial w$ is the projection from $\mathbb{R}^d\to V_\alpha$. 
\end{defn}
\begin{thm}\label{thm:degenerate}
	If the mapping $w$ has rank at least $k$, then solutions of \eqref{eq:phasetransport} verify the decay estimate
	\[ \overline{\nu}(t,q) \leq \langle t\rangle^{-k} \|\nu(0,\text{---})\|_{W^{d,1}(\mathbb{R}^{2d}, \varpi(p) ~\mathrm{d}p~\mathrm{d}q)},\]
	where the right hand side denotes the weighted Sobolev space with some weight $\varpi(p)$ which depends on $w$ but not on the solution $\nu$. 
\end{thm}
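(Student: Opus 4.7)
My plan is to generalize the commuting vector field argument from the nondegenerate case to the rank-$k$ setting by patching locally-defined boost fields via a partition of unity in $p$-space. The first step is to identify vector fields that commute with the transport operator $L := \partial_t + w(p)\cdot\partial_q$ for general $w$. A direct computation yields $[L,\partial_{p_i}] = -(\partial_{p_i}w)\cdot\partial_q$ and $[\partial_t, t(\partial_{p_i}w)\cdot\partial_q] = (\partial_{p_i}w)\cdot\partial_q$, so
\[ W_i := \partial_{p_i} + t\,(\partial_{p_i}w(p))\cdot\partial_q, \qquad i = 1,\ldots,d,\]
commutes with $L$. When $\partial w \equiv \mathrm{Id}$ this reproduces the Galilean boost used for the classical Vlasov equation; for general $w$, linear combinations of the $W_j$'s can produce a field of the form $t\partial_{q_i}+(p\text{-derivative})$ only where $\partial w$ is invertible, and the rank-$k$ condition is precisely what lets us do this in $k$ directions locally.

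Concretely, on each $U_\alpha$ I align the $q$-coordinates so that $V_\alpha$ is the span of the first $k$ coordinate directions, splitting $q=(q^{(1)},q^{(2)})$ accordingly, and form the $k$ local combinations
\[ \tilde W^{(i)}_\alpha := \sum_j (B_\alpha)_{ij} W_j = t\,\partial_{q^{(1)}_i} + \sum_j (B_\alpha)_{ij}\partial_{p_j}, \qquad i = 1,\ldots,k,\]
with matrix indices arranged according to the convention that makes $B_\alpha\cdot\partial w$ produce exactly the displayed $t$-weight. Since $B_\alpha$ depends only on $p$, each $\tilde W^{(i)}_\alpha$ still commutes with $L$. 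To pass from local to global I fix a smooth partition of unity $\{\chi_\alpha(p)\}$ subordinate to $\{U_\alpha\}$; because each $\chi_\alpha$ depends only on $p$, the product $\chi_\alpha \nu$ is itself a solution of \eqref{eq:phasetransport}, and consequently iterated applications of $\tilde W^{(i)}_\alpha$'s and $\partial_{q^{(2)}_j}$'s to $\chi_\alpha \nu$ remain solutions whose $L^1_{q,p}$ norms are conserved by Lemma~\ref{lem:ptconslaw}.

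The Klainerman--Sobolev step is then essentially as in the Vlasov proof. Setting $\overline{\nu}_\alpha(t,q) := \int \chi_\alpha \nu\,dp$ and applying the fundamental theorem of calculus in all $d$ directions of $q$, I get
\[ \overline{\nu}_\alpha(t,q) = \int_{Q(q)}\int_{\mathbb{R}^d} \partial_{q^{(1)}_1}\cdots\partial_{q^{(1)}_k}\,\partial_{q^{(2)}_1}\cdots\partial_{q^{(2)}_{d-k}}(\chi_\alpha \nu)\, dp\, dq'.\]
Multiplying by $t^k$ and substituting $t\partial_{q^{(1)}_i} = \tilde W^{(i)}_\alpha - \sum_j (B_\alpha)_{ij}\partial_{p_j}$ on $\operatorname{supp}(\chi_\alpha)$, I expand the product and integrate by parts in $p$ so that every surviving $\partial_{p_j}$ is moved onto the $B_\alpha$-coefficients (boundary terms vanishing by decay at infinity). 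This expresses $|t|^k|\overline{\nu}_\alpha(t,q)|$ as a finite sum of terms
\[ \iint_{\mathbb{R}^d\times\mathbb{R}^d} \bigl| a_{\alpha,I}(p)\, \tilde W^I(\chi_\alpha\nu)(t,q,p) \bigr|\,dp\,dq,\]
where each $a_{\alpha,I}$ is a polynomial in $B_\alpha$, $\chi_\alpha$ and their $p$-derivatives, and each $\tilde W^I$ is a (possibly empty) product of $\tilde W^{(i)}_\alpha$'s and $\partial_{q^{(2)}_j}$'s. By Lemma~\ref{lem:ptconslaw} every summand is conserved in time, hence controlled at $t=0$ by $\|\nu(0,\cdot)\|_{W^{d,1}(\mathbb{R}^{2d},\varpi(p)\,dp\,dq)}$ for a weight $\varpi(p)$ assembled from $\{\chi_\alpha,B_\alpha\}$ and their $p$-derivatives. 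Summing over $\alpha$ (using local finiteness of the cover) gives the desired $|t|^{-k}$ decay for $|t|\geq 1$; for $|t|\leq 1$, boundedness of $\overline{\nu}$ follows directly from conservation of $\|\partial_q^\beta\nu(t,\cdot)\|_{L^1}$ as in the Vlasov argument.

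The main obstacle I anticipate is the combinatorial bookkeeping. Unlike in the nondegenerate Vlasov case, the fields $\tilde W^{(i)}_\alpha$ need not commute with each other: their mutual commutators are first order in $\partial_p$ with coefficients built from $B_\alpha$ and its $p$-derivatives. Consequently the expansion of $(t\partial_{q^{(1)}_1})\cdots(t\partial_{q^{(1)}_k})$ in terms of $\tilde W^{(i)}_\alpha$'s and $B_\alpha\cdot\partial_p$'s is not a clean binomial expansion, and the explicit weight $\varpi$ that must simultaneously control every resulting $a_{\alpha,I}$ requires a careful accounting. The underlying structure of the estimate, however, follows mechanically from the ingredients above.
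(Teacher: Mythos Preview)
Your proposal is correct and follows essentially the same route as the paper: the same commuting fields $W_i$, the same partition-of-unity localization $\nu_\alpha=\chi_\alpha\nu$, and the same Klainerman--Sobolev step via the fundamental theorem of calculus. The one refinement in the paper worth noting is that it adds a zeroth-order correction, setting $\tilde W_\ell\nu_\alpha:=\sum_i B_{\alpha,\ell i}W_i\nu_\alpha+(\partial_{p_i}B_{\alpha,\ell i})\nu_\alpha$ so that $\tilde W_\ell\nu_\alpha=t\partial_{q_\ell}\nu_\alpha+\sum_i\partial_{p_i}(B_{\alpha,\ell i}\nu_\alpha)$ is in $p$-divergence form; this yields $\int\tilde W_1\cdots\tilde W_k\nu_\alpha\,dp=t^k\int\partial_{q_1}\cdots\partial_{q_k}\nu_\alpha\,dp$ directly and eliminates the combinatorial bookkeeping you flagged as the main obstacle.
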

\begin{proof}
	First observe that the analogues of the Galilean symmetry vector fields are $W_i := \partial_{p_i} + t \sum_{j = 1}^d (\partial_{p_i} w^j )\partial_{q_j}$. If $\nu$ solves \eqref{eq:phasetransport} then so does $W_i \nu$. By linearity, we see also that for any set of functions $f_i:\mathbb{R}^d \to\mathbb{R}$ and $g:\mathbb{R}^d\to\mathbb{R}$, the function 
	\[ \sum_{i = 1}^{d} f_i(p) W_i\nu(t,q,p) + g(p) \nu(t,q,p) \]
	also solves \eqref{eq:phasetransport}. 

	By assumption there exists a preferred locally finite cover $U_\alpha$ of $\mathbb{R}^d$. Let $\chi_\alpha$ denote a subordinate partition of unity, and let $B_\alpha$ be the corresponding matrix valued functions. Now fix $\alpha$. Without loss of generality we can assume that $V_\alpha$ is equal to the span of $\{e_1, \ldots, e_k\}$ the first $k$ standard vectors. Then for $\ell \in \{1, \ldots, k\}$ we can define 
	\[ \tilde{W}_\ell \nu_{\alpha} := \sum_{i = 1}^d B_{\alpha,\ell i} W_i \nu_{\alpha} + (\partial_{p_i} B_{\alpha,\ell i})\nu_{\alpha} ,\]
	where
	\[ \nu_\alpha(t,q,p) := \chi_\alpha(p) \nu(t,q,p).\]
	Now, since $\nu_\alpha$ is obtained from $\nu$ with a velocity cut-off, whenever $\nu$ solves \eqref{eq:phasetransport} so does $\nu_\alpha$. Then the discussion at the beginning of this proof shows that $\tilde{W}_\ell \nu_{\alpha}$ is a solution also. Using the properties of $B_\alpha \cdot \partial w$ as a projection, we have that in fact
	\[ \tilde{W}_\ell \nu_{\alpha} 	= t \partial_{q_{\ell}} \nu_{\alpha} + \sum_{i = 1}^d \partial_{p_i} \left( B_{\alpha,\ell i} \nu_\alpha\right).\]
	
	This allows us to write 
	\[ t^k \overline{\nu_\alpha}(t,q) = \int_{Q(q)} \int_{U_\alpha} \tilde{W}_1 \tilde{W}_2\cdots \tilde{W}_k \partial_{q_{k+1}} \cdots \partial_{q_d} \nu_\alpha(t,q',p) ~\mathrm{d}p ~\mathrm{d}q'\]
	where $Q(q) = (-\infty,q_1)\times\cdots \times(-\infty,q_d)$ as before. This implies
	\[ t^k \overline{\nu_\alpha}(t,q) \leq \iint_{\mathbb{R}^d \times U_\alpha}  \left| \tilde{W}_1 \tilde{W}_2\cdots \tilde{W}_k \partial_{q_{k+1}} \cdots \partial_{q_d} \nu_\alpha(t,q',p)\right| ~\mathrm{d}p ~\mathrm{d}q'.\]
	The integral on the right is a conserved quantity, and so is entirely determined by the initial data. In particular, this shows that there exists some function $\varpi_\alpha:U_\alpha\to\mathbb{R}_+$ such that 
	\[ t^k \overline{\nu_\alpha}(t,q) \leq \|\nu(0,\text{---})\|_{W^{d,1}(\mathbb{R}^d\times U_\alpha, \varpi_{\alpha}(p)~\mathrm{d}p~\mathrm{d}q)}.\]
	Now, noting that 
	\[ \overline{\nu}(t,q) = \sum_{\alpha} \overline{\nu_\alpha}(t,q) \]
	by our partition of unity, we have that there exists some $\varpi:\mathbb{R}^d\to \mathbb{R}_+$ such that 
	\[ t^k \overline{\nu}(t,q) \leq \|\nu(0,\text{---})\|_{W^{d,1}(\mathbb{R}^{2d}, \varpi(p) ~\mathrm{d}p~\mathrm{d}q)}.\]
	Interpolating with the boundeness we have the result as claimed. 
\end{proof}

\begin{rmk}
	Theorem \ref{thm:degenerate} should be compared with Fourier restriction theorems to submanifolds with some degree of degeneracy in the curvature. A classical example of this scenario is that corresponding to the decay estimates for the linear wave equation \cite{SchlagMusculu}*{Section 11.3.4}.
\end{rmk}

\begin{rmk}
	The velocity cut-off used in the proof above is analogous to \emph{frequency cut-offs} in the study of solutions to constant coefficient partial differential equations. 
\end{rmk}

\section{Linear Schr\"odinger equation}\label{sec:sch}

Having treated the classical Vlasov equation and its cousins, let us now move our attention to the linear Schr\"odinger equation
\begin{equation}\label{eq:ls}
	\partial_t u + i\triangle u = 0
\end{equation}
where $u:\mathbb{R}\times\mathbb{R}^d\to\mathbb{C}$. Equation \eqref{eq:ls} can be understood as the quantum analogue of \eqref{eq:vlasov}: indeed we can simple-mindedly obtain Schr\"odinger's equation from the classical Vlasov equation by using the quantization $p \mapsto  i \partial_q$ relating the classical and quantum phase spaces. 

Our lesson from Vlasov equation suggests that we should look to using the Galilean boost in our vector field method. Our quantization procedure suggests that the correct linear operator should be
\begin{equation}
	W_j: u \mapsto t \partial_{q_j}u + \frac{i}{2} q_ju.
\end{equation}
Indeed one can check that if $u$ solves \eqref{eq:ls} then so does $W_j u$. The associated Klainerman-Sobolev estimate is
\begin{lem}[``Klainerman-Sobolev'' for Schr\"odinger] \label{lem:ksforls}
	Let $u$ be a smooth solution of \eqref{eq:ls} such that the trace $u(t,\text{---})$ for every $t$ is in Schwartz space. Then there exists a constant $C$ depending only on the dimension $d$ such that 
	\[ |t|^{d} \|u(t,\text{---})\|_{L^\infty(\mathbb{R}^d)}^2 \leq C \sum_{|\alpha| + |\beta| = d} \|W^\alpha u(t,\text{---})\|_{L^2(\mathbb{R}^d)}\|W^\beta u(t,\text{---})\|_{L^2(\mathbb{R}^d)}\]
	where $\alpha,\beta$ are multiindices and if $\alpha = (\alpha_1, \alpha_2, \cdots \alpha_d)$ we have the operator $W^{\alpha} = W_1^{\alpha_1} W_2^{\alpha_2} \cdots W_d^{\alpha_d}$.
\end{lem}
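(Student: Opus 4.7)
The plan is to reduce the Klainerman-Sobolev inequality for Schrödinger to a completely standard Gagliardo--Nirenberg-type Sobolev inequality for the flat Laplacian, via a gauge transformation that diagonalizes the operators $W_j$. Concretely, I would introduce the phase-twisted function
\[ v(t,q) := e^{i|q|^2/(4t)} u(t,q).\]
A direct computation shows that $\partial_{q_j} v = \tfrac{i q_j}{2t} v + e^{i|q|^2/(4t)} \partial_{q_j} u$, so that
\[ W_j u = t\,\partial_{q_j} u + \tfrac{i}{2} q_j u = t\, e^{-i|q|^2/(4t)} \partial_{q_j} v. \]
Since the $W_j$'s commute with each other (the commutator $[W_j,W_\ell]$ is a sum of two canceling $\tfrac{it}{2}\delta_{j\ell}$ terms), iterating gives $W^\alpha u = t^{|\alpha|} e^{-i|q|^2/(4t)} \partial^\alpha_q v$. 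In particular $|u|=|v|$ pointwise and $\|W^\alpha u\|_{L^2} = |t|^{|\alpha|} \|\partial^\alpha v\|_{L^2}$.

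Next I would prove a purely Euclidean Sobolev inequality for $v$: for any Schwartz-class complex function,
\[ \|v\|_{L^\infty(\mathbb{R}^d)}^2 \leq C \sum_{\substack{|\alpha|+|\beta|=d \\ \alpha_i+\beta_i=1}} \|\partial^\alpha v\|_{L^2}\|\partial^\beta v\|_{L^2}.\]
This follows from the fundamental theorem of calculus applied coordinate-by-coordinate to $|v|^2$:
\[ |v(q)|^2 = \int_{Q(q)} \partial_{q_1}\partial_{q_2}\cdots\partial_{q_d}(v\bar v)\,\mathrm{d}q',\]
expanding the mixed derivative by the Leibniz rule as a sum of $2^d$ products $\partial^\alpha v\cdot \partial^\beta \bar v$ with $\alpha_i+\beta_i=1$, and applying Cauchy--Schwarz to each term.

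Combining the two steps, multiply the Sobolev inequality for $v$ through by $|t|^d$, use $|t|^{|\alpha|+|\beta|}=|t|^d$, and translate each $|t|^{|\alpha|}\|\partial^\alpha v\|_{L^2}$ back into $\|W^\alpha u\|_{L^2}$. The result is exactly the inequality claimed in Lemma \ref{lem:ksforls}, since the sum in the statement is over \emph{all} multiindices with $|\alpha|+|\beta|=d$ and dominates the restricted sum arising from the Leibniz expansion. The constant $C$ depends only on $d$ via the combinatorial Leibniz coefficients.

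The main conceptual hurdle is recognizing the gauge conjugation $v=e^{i|q|^2/(4t)}u$ that turns $W_j$ into a multiple of the plain derivative $\partial_{q_j}$; this is the Schrödinger analogue of the fact that, in the Vlasov case, the Galilean boost $t\partial_{q_j}+\partial_{p_j}$ becomes a pure $p$-derivative after composing with the flow. Once that observation is made, the remaining argument is entirely classical real-variable analysis, and no issues with justification arise under the standing Schwartz assumption on $u(t,\cdot)$.
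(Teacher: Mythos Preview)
Your argument is correct and, once unwound, is the same proof as the paper's. The paper does not introduce the gauge function $v=e^{i|q|^2/(4t)}u$; instead it works directly with the identity
\[
t\,\partial_{q_j}(u\bar v)=\bar v\,W_j u + u\,\overline{W_j v},
\]
iterates it inside the orthant integral $\int_{Q(q)}\partial_{q_1}\cdots\partial_{q_d}(u\bar u)\,\mathrm{d}q'$, and applies Cauchy--Schwarz. Your gauge conjugation is exactly what turns this ``Leibniz-like rule'' into the ordinary Leibniz rule for $\partial_{q_j}$, so the two computations match line by line after multiplying through by the unimodular phase. The gauge point of view has the merit of explaining \emph{why} the identity holds (it is the well-known pseudo-conformal factorization $W_j = t\,e^{-i|q|^2/(4t)}\partial_{q_j}e^{i|q|^2/(4t)}$), while the paper's version is slightly more economical since it never names $v$.
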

\begin{rmk}
	Note that $[W_j,W_k] = 0$, so the order in which the components of $W^{\alpha}$ are listed does not matter. 
\end{rmk}
\begin{proof}
	Letting again $Q(q)$ be the orthant below $q$, we note that (here $\bar{u}$ denotes the complex conjugate)
	\[ u(t,q)\bar{u}(t,q) = \int_{Q(q)} \partial_1\partial_2\cdots \partial_d [u(t,q')\bar{u}(t,q')]~\mathrm{d}q'. \]
	Next notice that we have the Leibniz-like rule
	\[ t \partial_j [u \bar{v}] = t\bar{v} \partial_j u + t u \partial_j\bar{v} = \bar{v} W_j u + u \overline{W_j v}.\]
	So our lemma follows from Cauchy-Schwarz. 
\end{proof}

To better capture the decay properties, we introduce the dyadic norm $X^{\theta,q}$: let $\phi_k$ denote a sequence of bump functions such that 
	\begin{itemize}
		\item $\sum_{k\in\mathbb{Z}} \phi_k \equiv 1$;
		\item $\phi_k$ is supported in the annulus of of inner radius $2^{k-1}$ and outer radius $2^{k+1}$;
		\item $\phi_k$ is smooth, real-valued, and non-negative.
	\end{itemize}
We define
\begin{equation}\label{eq:defxtheta}
	\| f\|_{X^{\theta,q}} := \left\| \left(2^{\theta k} \| \phi_k f\|_{L^2}\right)_{k\in\mathbb{Z}} \right\|_{\ell^q}.
\end{equation}
Quite obviously, we have
\begin{equation}
	\|f\|_{X^{\theta,2}} \approx \| ~|\text{---}|^\theta f(\text{---}) \|_{L^2}.
\end{equation}

\begin{thm}[Dispersive estimate for Schr\"odinger] \label{thm:displs}
	There is a constant $C$ such that every solution $u$ 
	of \eqref{eq:ls} such that the trace $u(t,\text{---})$ for every $t$ is in Schwartz space satisfies 
	\[ |t|^{d/2} \|u(t,\text{---})\|_{L^\infty(\mathbb{R}^d)} \leq C \left\| u(0,\text{---})\right\|_{X^{d/2,1}(\mathbb{R}^d)}.\]
\end{thm}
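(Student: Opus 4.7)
The plan is to combine Lemma \ref{lem:ksforls} with conservation of the $L^2$-norms of the vector-field derivatives $W^\alpha u$, together with a dyadic decomposition of the initial data that converts the resulting moment estimate into the required $\ell^1$ Besov-type norm on the right-hand side. The preliminary observation is that since $W_j$ commutes with $\partial_t + i\triangle$, the quantity $\|W^\alpha u(t,\cdot)\|_{L^2}$ is conserved in time. Evaluating at $t = 0$, where $W_j = (i/2) q_j$ as a pure multiplication operator, gives the identity $\|W^\alpha u(t,\cdot)\|_{L^2} = 2^{-|\alpha|}\|q^\alpha u(0,\cdot)\|_{L^2}$ for all $t$ and all multiindices $\alpha$.

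To exploit this, I would first decompose the initial data dyadically as $u(0,\cdot) = \sum_k \phi_k u(0,\cdot)$ and let $u_k$ denote the solution of \eqref{eq:ls} with initial datum $\phi_k u(0,\cdot)$; by linearity $u = \sum_k u_k$. Applying Lemma \ref{lem:ksforls} together with the conservation identity above to each $u_k$, and using that $\phi_k u(0,\cdot)$ is supported in $\{|q|\le 2^{k+1}\}$ to bound $\|q^\alpha \phi_k u(0,\cdot)\|_{L^2} \leq 2^{(k+1)|\alpha|}\|\phi_k u(0,\cdot)\|_{L^2}$, yields
\[ |t|^d \|u_k(t,\cdot)\|_{L^\infty}^2 \leq C \sum_{|\alpha|+|\beta|=d} 2^{k(|\alpha|+|\beta|)} \|\phi_k u(0,\cdot)\|_{L^2}^2 \leq C' 2^{kd} \|\phi_k u(0,\cdot)\|_{L^2}^2. \]
Taking square roots produces the single-scale bound $|t|^{d/2}\|u_k(t,\cdot)\|_{L^\infty} \leq C \cdot 2^{kd/2}\|\phi_k u(0,\cdot)\|_{L^2}$.

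The last step is to sum dyadically: by the triangle inequality,
\[ |t|^{d/2}\|u(t,\cdot)\|_{L^\infty} \leq \sum_k |t|^{d/2}\|u_k(t,\cdot)\|_{L^\infty} \leq C \sum_k 2^{kd/2}\|\phi_k u(0,\cdot)\|_{L^2} = C\|u(0,\cdot)\|_{X^{d/2,1}}, \]
which is the claimed estimate. The main conceptual step --- and the one most likely to be overlooked --- is the dyadic decomposition itself: applied directly to the full data, Lemma \ref{lem:ksforls} combined with $W$-conservation only controls the various moments $\|q^\alpha u(0,\cdot)\|_{L^2}$ with $|\alpha|\leq d$, which morally corresponds to an $X^{d/2,2}$-type estimate rather than the sharper $X^{d/2,1}$ bound. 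The physical-space localization supplied by the $\phi_k$ converts the mixed moment norms into a clean single-scale bound at each dyadic level, after which the $\ell^1$ summation produces the sharp Besov-type norm. Once this decomposition is in place the remaining manipulations are routine, and issues like convergence of $\sum_k u_k$ are harmless given the Schwartz hypothesis on $u(t,\cdot)$.
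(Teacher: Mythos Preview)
Your proof is correct and follows essentially the same route as the paper: apply Lemma~\ref{lem:ksforls}, use $L^2$-conservation of $W^\alpha u$ to reduce to moment norms of the initial data, then dyadically decompose the data in physical space so that each shell contributes $2^{kd/2}\|\phi_k u(0,\cdot)\|_{L^2}$, and sum in $\ell^1$. One small remark on your closing commentary: the naive (undecomposed) estimate does not really give an $X^{d/2,2}$-type bound but rather something controlled by $\|\langle q\rangle^{d} u(0,\cdot)\|_{L^2}$, i.e.\ a weight that is a full factor of $|q|^{d/2}$ too strong compared to scaling; the dyadic decomposition fixes the \emph{weight} (from $d$ to $d/2$), not merely the summability exponent.
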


\begin{proof}
	Using that $W_ju$ solves also \eqref{eq:ls} and that the $L^2$ mass is conserved for solutions of Schr\"odinger equation, we see that Lemma \ref{lem:ksforls} implies
	\begin{equation} \label{eq:firstdis}
		|t|^{d} \|u(t,\text{---})\|_{L^\infty}^2 \leq C \sum_{|\alpha| + |\beta| = d} \| (\text{---})^\alpha u(0,\text{---})\|_{L^2}\|(\text{---})^\beta u(0,\text{---})\|_{L^2}. 
	\end{equation}
	This implies directly that
	\[ |t|^{d/2} \|u(t,\text{---})\|_{L^\infty} \leq C \| \langle \text{---}\rangle^d u(0,\text{---})\|_{L^2}\]
	which, while does in fact give the correct temporal decay, has a spatial weight that is too strong compared to scaling (see Remark \ref{rmk:scaling} below). To tighten the weights we use our dyadic decomposition. 
	We write for $u_k$ the solution to \eqref{eq:ls} with initial data $u_k(0,q) = \phi_k(q) u(0,q)$. By linearity we have that 
	\[ u = \sum_{k\in\mathbb{Z}} u_k.\]
	Equation \eqref{eq:firstdis} implies
	\begin{align*}
		|t|^{d/2} \|u(t,\text{---})\|_{L^\infty} & \leq t^{d/2} \sum_{k\in\mathbb{Z}} \|u_k(t,\text{---})\|_{L^\infty}\\
		& \lesssim_{d} \sum_{k\in\mathbb{Z}} \left( \sum_{|\alpha| + |\beta| = d}   \| (\text{---})^\alpha u_k(0,\text{---})\|_{L^2}\|(\text{---})^\beta u_k(0,\text{---})\|_{L^2} \right)^{\frac12}
	\end{align*}
	Using the restricted spatial support, we have that
	\[ \| (\text{---})^\alpha u_k(0,\text{---})\|_{L^2} \leq 2^{(k+1)|\alpha|} \|u_k(0,\text{---})\|_{L^2}\]
	so
	\[ |t|^{d/2} \|u(t,\text{---})\|_{L^\infty} \lesssim_d \sum_{k\in \mathbb{Z}} 2^{kd/2} \|u_k(0,\text{---})\|_{L^2}\]
	as claimed. 
\end{proof}
\begin{rmk}\label{rmk:scaling}
	The classical dispersive inequality for Schr\"odinger's equation takes the form \cite{PrinAnal4}
	\[ |t|^{d/2} \|u(t,\text{---})\|_{L^\infty} \lesssim_d \|u(0,\text{---})\|_{L^1}.\]
	One easily checks that $X^{d/2,1}$ embeds strictly into $L^1$, so Theorem \ref{thm:displs} follows from the classical dispersive inequality for Schr\"odinger's equation. We shall show later that Theorem \ref{thm:displs} also \emph{implies} the classical $L^1$--$L^\infty$ estimate, in spite of the fact that there exists $L^1$ functions not in $X^{d/2,1}$. 
\end{rmk}

\begin{rmk}
	The dyadic decomposition in physical space that is used to recover the correct scaling of $|q|^{d/2}$ from the more lossy naive estimate \eqref{eq:firstdis} is reminiscent of an argument given by Klainerman in \cite{KlainermanCommuting}. There, the aim is to recover the Strichartz estimate for wave equations from the Klainerman-Sobolev inequalities. The linear dispersive estimate for wave equations, however, has some built-in smoothing property that appears to gain $(d-1)/2$ derivatives (on the $L^1$ scale) compared to (Klainerman-)Sobolev. Klainerman overcame this by a phase-space localization procedure. 

	(We note in passing that the estimate in Theorem \ref{thm:displs} also exhibits smoothing. Compared to Sobolev embedding it gains $d/2$ derivatives in the $L^2$ scale.)

	In terms of the pointwise estimate, one should note that the dyadic decomposition of initial data in physical space is related to a dyadic decomposition in frequency space. The intuition from the Vlasov equation suggests that the ``wave packets'' which contribute to the field at time $t$ at $q = 0$ that originated from time $0$ at $|q| \approx 2^k$ will have velocity $\approx 2^{k} / t$. That is to say, we expect $u(1,0) \approx \sum P_k u_k(1,0)$ where $P_k$ is the standard Littlewood-Paley projector. So frequency space and physical space decompositions are expected to have similar effects in the course of this proof. 
\end{rmk}

With real interpolation (see \cite{Interpolation}*{Chapter 5} for the results needed) we have, as an immediate corollary, the following result.
\begin{cor}\label{cor:untruncated}
	For every $\theta\in [0,1]$ there exists $C$ depending on $\theta$ such that the estimate
	\[ |t|^{\theta d / 2} \|u(t,\text{---})\|_{L^{2/(1-\theta)}} \leq C \|u(0,\text{---})\|_{X^{\theta d/2,2}} \]
	holds for solutions $u$ of \eqref{eq:ls}.
\end{cor}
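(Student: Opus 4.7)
The plan is to obtain the estimate by real interpolation between two endpoint bounds for the Schr\"odinger propagator $S_t : u(0,\text{---}) \mapsto u(t,\text{---})$. The endpoint $\theta = 0$ is the trivial bound $\|S_t u_0\|_{L^2} = \|u_0\|_{L^2} \approx \|u_0\|_{X^{0,2}}$ coming from $L^2$-mass conservation for \eqref{eq:ls} together with the equivalence $L^2 \approx X^{0,2}$, and the endpoint $\theta = 1$ is exactly Theorem \ref{thm:displs}: $\|S_t u_0\|_{L^\infty} \lesssim |t|^{-d/2} \|u_0\|_{X^{d/2,1}}$.

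Fix $\theta \in (0,1)$ and apply the real interpolation functor $(\cdot,\cdot)_{\theta,q}$ with $q = 2/(1-\theta)$ to both sides. The abstract real interpolation theorem then delivers
\[ \| S_t \|_{(X^{0,2}, X^{d/2,1})_{\theta, q} \to (L^2, L^\infty)_{\theta, q}} \lesssim |t|^{-\theta d / 2}. \]
The target space is identified as $L^{2/(1-\theta)}$ via the classical formula $(L^2, L^\infty)_{\theta, q} = L^{2/(1-\theta), q}$, which collapses to the ordinary Lebesgue space under the specific choice $q = 2/(1-\theta)$. For the source space, the dyadic partition of unity $\{\phi_k\}$ presents each $X^{s, r}$ as a retract of the weighted vector-valued sequence space $\ell^r(2^{sk}; L^2(\mathbb{R}^d))$, via the maps $R: f \mapsto (\phi_k f)_k$ and $S: (f_k) \mapsto \sum_k f_k$; the coretract $S$ is bounded because $\phi_j \phi_k = 0$ once $|j-k|$ is large, so only a uniformly finite number of the $f_k$ contribute to any given dyadic shell. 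The standard real interpolation formula for such weighted $\ell^r$ sequence spaces, combined with the retract principle, then yields $(X^{0,2}, X^{d/2, 1})_{\theta, q} = X^{\theta d/2, q}$.

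Since $q = 2/(1-\theta) \geq 2$, the elementary inclusion $\ell^2 \hookrightarrow \ell^q$ applied fiber-by-fiber gives the continuous embedding $X^{\theta d/2, 2} \hookrightarrow X^{\theta d/2, q}$, and chaining this with the two interpolation-space identifications above produces the claimed inequality. The main technical point, and the only step that is not just bookkeeping, is the retract identification of the source interpolation space; this follows the same blueprint as the analogous result for Besov spaces in \cite{Interpolation}*{Chapter 6}, and is the piece I would verify most carefully.
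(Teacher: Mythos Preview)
Your approach is exactly the paper's: the corollary is stated as an immediate consequence of real interpolation (the paper cites \cite{Interpolation}*{Chapter 5} and gives no further details), interpolating between mass conservation at $\theta=0$ and Theorem~\ref{thm:displs} at $\theta=1$, followed by the embedding $X^{\theta d/2,2}\hookrightarrow X^{\theta d/2,q}$.

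One small slip in your write-up: the coretract $S:(f_k)\mapsto\sum_k f_k$ is \emph{not} bounded from $\ell^r(2^{sk};L^2)$ to $X^{s,r}$ for the reason you give, since for an arbitrary sequence $(f_k)$ the functions $f_k$ need not be supported in the $k$th shell, and the overlap condition $\phi_j\phi_k=0$ for $|j-k|\geq 2$ says nothing about $\phi_j f_k$. The standard fix, exactly as in the Besov-space argument you cite, is to take $S:(f_k)\mapsto\sum_k\tilde\phi_k f_k$ with $\tilde\phi_k$ a fattened bump equal to $1$ on $\operatorname{supp}\phi_k$; then $\phi_j\tilde\phi_k=0$ for $|j-k|$ large gives the finite overlap you want, and $SR=\mathrm{id}$ still holds because $\tilde\phi_k\phi_k=\phi_k$. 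With that adjustment your argument goes through.
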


To control the $L^p$ norms on time slices when $|t|$ is small, we can use the Sobolev embedding $H^{\theta d/2}(\mathbb{R}^d) \hookrightarrow L^{2/(1-\theta)}(\mathbb{R}^d)$ for $\theta\in [0,1)$. The conservation of the $H^s$ norms for the Schr\"odinger equation implies then
\begin{cor}\label{cor:truncated}
	For every $\theta\in[0,1)$, there exists $C$ depending on $\theta$ such that the estimate
	\[ \langle t\rangle^{\theta d/2} \|u(t,\text{---})\|_{L^{2/(1-\theta)}} \leq C \left[ 
	    \|\; |\text{---}|^{\theta d/2} u(0,\text{---}) \|_{L^2} + \| u(0,\text{---})\|_{H^{\theta d/2}} \right] \]
	holds for solutions $u$ of \eqref{eq:ls}.
\end{cor}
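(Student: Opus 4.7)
The plan is to split into the cases $|t|\geq 1$ and $|t|\leq 1$ and to treat each case using an already-available ingredient: Corollary \ref{cor:untruncated} takes care of the dispersive regime, and a standard Sobolev embedding together with conservation of $H^s$ norms takes care of short times.

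For $|t| \geq 1$, I invoke Corollary \ref{cor:untruncated} directly. Since the bump functions $\phi_k$ are nonnegative with $\sum_k \phi_k\equiv 1$ and supported on dyadic annuli, the equivalence
\[ \|f\|_{X^{\theta d/2,2}} \approx \|\,|\text{---}|^{\theta d/2} f\|_{L^2} \]
noted just after \eqref{eq:defxtheta} lets me identify the right-hand side of Corollary \ref{cor:untruncated} with $\|\,|\text{---}|^{\theta d/2} u(0,\text{---})\|_{L^2}$ up to a constant depending only on $\theta$ and $d$. Since $|t|\geq 1$ implies $\langle t\rangle \leq \sqrt{2}\,|t|$, this produces the bound
\[ \langle t\rangle^{\theta d/2} \|u(t,\text{---})\|_{L^{2/(1-\theta)}} \lesssim_\theta \|\,|\text{---}|^{\theta d/2} u(0,\text{---})\|_{L^2}, \]
which is even better than what is claimed.

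For $|t| \leq 1$, I use that $\theta < 1$, so that $p := 2/(1-\theta) < \infty$. The critical Sobolev embedding $H^{\theta d/2}(\mathbb{R}^d) \hookrightarrow L^{p}(\mathbb{R}^d)$ (the exponent $\theta d/2$ is precisely $d(1/2 - 1/p)$) gives
\[ \|u(t,\text{---})\|_{L^{2/(1-\theta)}} \lesssim_\theta \|u(t,\text{---})\|_{H^{\theta d/2}}. \]
The $H^s$ norms of a Schr\"odinger solution are conserved: since $\langle D\rangle^{\theta d/2}$ commutes with $\partial_t + i\triangle$ and the $L^2$ norm is conserved, we have $\|u(t,\text{---})\|_{H^{\theta d/2}} = \|u(0,\text{---})\|_{H^{\theta d/2}}$. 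Combined with $\langle t\rangle \leq \sqrt{2}$ for $|t|\leq 1$, this yields the short-time half of the estimate.

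Adding the two resulting estimates (bounding the minimum of $1$ and $|t|^{-\theta d/2}$ factors by the respective right-hand sides on each regime) gives the claim uniformly in $t\in\mathbb{R}$. There is no genuine obstacle here; the only subtlety worth flagging is the endpoint behavior as $\theta\to 1$, where the Sobolev embedding degenerates (one approaches $L^\infty$, which is not controlled by $H^{d/2}$), which is exactly why the statement excludes $\theta = 1$ and why, for $\theta = 1$, one must revert to the pointwise estimate of Theorem \ref{thm:displs} rather than its $L^p$ interpolant.
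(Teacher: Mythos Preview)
Your proposal is correct and follows essentially the same approach as the paper: the paper states the corollary as an immediate consequence of combining Corollary~\ref{cor:untruncated} (for large $|t|$, via the equivalence $\|f\|_{X^{\theta d/2,2}} \approx \|\,|\text{---}|^{\theta d/2} f\|_{L^2}$) with the Sobolev embedding $H^{\theta d/2}(\mathbb{R}^d)\hookrightarrow L^{2/(1-\theta)}(\mathbb{R}^d)$ and conservation of $H^s$ norms (for small $|t|$). Your remark on the failure at $\theta=1$ also matches the paper's own commentary in the remark following the corollary.
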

\begin{rmk}
	Note that Corollary \ref{cor:truncated} does not apply to the end-point $L^\infty$ case due to the failure of the Sobolev embedding in that case, as well as the failure of having an $X^{d/2,2}$ estimate (we only have $X^{d/2,1}$).  Corollaries \ref{cor:untruncated} and \ref{cor:truncated} should be compared with Theorem \ref{thm:degenerate}.
	In Theorem \ref{thm:degenerate} ``regularity in the $p$ direction (velocity/frequency space)'' is what guarantees long-time decay, while ``regularity in the $q$ direction (position/physical space)'' is what guarantees short-time boundedness. Analogously, in Corollary \ref{cor:untruncated} it is weights in physical space (which by the Fourier transform is equivalent to regularity in frequency space in the quantum picture) that guarantees the long-time decay of solutions, while regularity in physical space is again used to guarantee short-time boundedness in Corollary \ref{cor:truncated}. That Theorem \ref{thm:degenerate} can get the end-point $L^\infty$ estimate is down to our working in $L^1$ instead of $L^2$ based spaces in that scenario. 
\end{rmk}
\begin{rmk}
	The conservation of $H^s$ norms for solutions to \eqref{eq:ls} is part of the more general fact that if $T$ is a Fourier multiplier then $\|T u(t,\text{---})\|_{L^2}$ is conserved for \eqref{eq:ls}. This fact is obvious using the Fourier representation of the solutions, and should be compared to Lemma \ref{lem:ptconslaw} for the Vlasov equation. 
\end{rmk}


We next prove Strichartz-type estimates. Noting
\[ \| \phi_k f\|_{L^2} \leq C \cdot 2^{kd/2} \| f\|_{L^\infty} .\]
we have that
\begin{equation}
	\|f \|_{X^{-d/2,\infty}} \lesssim \|f\|_{L^\infty}.
\end{equation}
So by Theorem \ref{thm:displs} we get that 
\begin{equation}
	|t|^{d/2} \| U(t)f \|_{X^{-d/2,\infty}} \leq C \| f\|_{X^{d/2,1}}.
\end{equation}
On the other hand, mass conservation gives
\begin{equation}
	\|U(t) f\|_{X^{0,2}} = \|f\|_{X^{0,2}}.
\end{equation}
Interpolating between the two (see \cite{Interpolation}*{Theorem 5.6.1}) we obtain
the following lemma.
\begin{lem}\label{lem:localmass}
	For every $\sigma\in [0,d/2)$, there exists a constant $C$ such that for every $f\in \mathcal{S}(\mathbb{R}^d)$
	\[ |t|^\sigma \|U(t)f\|_{X^{-\sigma,2}} \leq C \|f\|_{X^{\sigma,2}}.\]
\end{lem}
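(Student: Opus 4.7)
The plan is to apply real interpolation to the linear operator $U(t)$ between the two endpoint bounds stated immediately before the lemma. At one endpoint, mass conservation gives $U(t): X^{0,2}\to X^{0,2}$ with operator norm $1$. At the other, combining Theorem \ref{thm:displs} with the embedding $\|g\|_{X^{-d/2,\infty}}\lesssim \|g\|_{L^\infty}$ just established yields $U(t): X^{d/2,1}\to X^{-d/2,\infty}$ with operator norm $\lesssim |t|^{-d/2}$. Setting $\theta = 2\sigma/d \in [0,1)$, the real method with second parameter $q = 2$ applied to each pair should produce $U(t): X^{\sigma,2}\to X^{-\sigma,2}$ with operator norm $\lesssim (|t|^{-d/2})^{\theta} = |t|^{-\sigma}$, which is the claim.

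The only technical step is the identification of the interpolation spaces. I would argue that $X^{s,q}$ is a retract of the weighted vector-valued sequence space $\ell^q_s(\mathbb{Z};L^2(\mathbb{R}^d))$ (with weight $2^{sk}$) via the maps
\[ J : f \mapsto (\phi_k f)_{k\in \mathbb{Z}}, \qquad R : (g_k)_{k\in \mathbb{Z}} \mapsto \sum_{k} \tilde\phi_k g_k, \]
where $\tilde\phi_k$ is a slight enlargement of $\phi_k$ chosen so that $\sum_k \tilde\phi_k \phi_k \equiv 1$; then $R\circ J = \mathrm{id}$ by the partition of unity property, and $J$ is in fact isometric by the very definition \eqref{eq:defxtheta}. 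By the standard retraction theorem of interpolation theory (\cite{Interpolation}*{Theorem 6.4.2}), real interpolation commutes with this construction. The classical formula
\[ (\ell^{q_0}_{s_0}(L^2),\ell^{q_1}_{s_1}(L^2))_{\theta,q} = \ell^{q}_{s}(L^2), \qquad s = (1-\theta)s_0 + \theta s_1, \]
valid for any $q \in [1,\infty]$ whenever $s_0 \neq s_1$, then supplies
\[ (X^{0,2}, X^{d/2,1})_{\theta,2} = X^{\sigma,2}, \qquad (X^{0,2}, X^{-d/2,\infty})_{\theta,2} = X^{-\sigma,2}, \]
together with the matching operator-norm bound from the real interpolation theorem \cite{Interpolation}*{Theorem 5.6.1}.

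The main obstacle, such as it is, is purely bookkeeping: one needs to confirm the precise formulation of real interpolation for weighted $\ell^q$-valued sequence spaces available in the reference, and verify that the retract construction behaves well uniformly across the range $\sigma \in [0,d/2)$. No further analytic input beyond Theorem \ref{thm:displs} and the conservation of the $L^2$-mass under \eqref{eq:ls} is required; both endpoint operator norms have already been extracted in explicit form above, so the interpolation step is essentially automatic.
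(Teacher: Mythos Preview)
Your proposal is correct and is essentially the paper's own argument: the paper derives the lemma by real interpolation (citing \cite{Interpolation}*{Theorem 5.6.1}) between exactly the two endpoint bounds you identify, $U(t):X^{0,2}\to X^{0,2}$ and $U(t):X^{d/2,1}\to X^{-d/2,\infty}$. You have merely made explicit the retract identification $(X^{0,2},X^{\pm d/2,q})_{\theta,2}=X^{\pm\sigma,2}$ that the paper leaves to the reader.
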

\begin{rmk}
	This lemma, and the Strichartz-type estimate to be given below, are really statements concerning time-decay and integrability (as a function of time) of local mass. 
	Letting $\sigma = \theta d/2$ for $\theta\in [0,1)$, we see that the norm $X^{-\sigma,2}$ has the same scaling as $L^{2/(1-\theta)}$ that appears in Corollary \ref{cor:untruncated}; the two norms are, however, not comparable. In terms of scaling, this lemma and the Strichartz estimate to follow are sharp. 
\end{rmk}

Now, letting $\Phi, \Psi$ be functions on $\mathbb{R}\times\mathbb{R}^d$, Lemma \ref{lem:localmass} implies that for $\sigma\in [0,d/2)$:
\begin{equation}\label{eq:bilinearpttime}
	 |t-s|^{\sigma} \left\langle U(t)^* \Phi(t,\text{---}), U(s)^* \Psi(s,\text{---}) \right\rangle \lesssim_{d,\sigma} \|\Phi(t,\text{---})\|_{X^{\sigma,2}} \|\Psi(s,\text{---})\|_{X^{\sigma,2}},
\end{equation}
where $\langle \text{---},\text{---}\rangle$ denotes the $L^2(\mathbb{R}^d,\mathbb{C})$ pairing. Recall now the Hardy-Littlewood-Sobolev lemma, which states that when $g = |\text{---}|^{-\sigma} * f$ are functions on the real line, then
\[ \|g\|_{L^q(\mathbb{R})} \lesssim \|f\|_{L^p(\mathbb{R})} \]
when $q > p > 1$ and $0 < \sigma = 1 + q^{-1} - p^{-1}$. Applying to the case $q^{-1} + p^{-1} = 1$ which requires $p = 2/(2-\sigma)$, we get from \eqref{eq:bilinearpttime}
\begin{equation}
	\iint_{\mathbb{R}^2} \left\langle U(t)^* \Phi(t,\text{---}), U(s)^* \Psi(s,\text{---}) \right\rangle ~\mathrm{d}s~\mathrm{d}t \lesssim_{d,\sigma} \|\Phi(t,\text{---})\|_{L^p_tX^{\sigma,2}} \|\Psi(s,\text{---})\|_{L^p_sX^{\sigma,2}}.
\end{equation}
So by the $TT^*$ argument we get finally
\begin{thm}[$X^{\theta,q}$ Strichartz-type inequalities for Schr\"odinger]
	The Schr\"odinger propagator $U(t)$ satisfies
	\begin{align*}
	\| U(t)\phi\|_{L^{p'}_t X^{-\sigma,2}} &  \lesssim_{d,\sigma} \|\phi\|_{L^2(\mathbb{R}^d)}, \\
	\left\| \int_{\mathbb{R}} U(s)^*\Phi(s,\text{---}) ~\mathrm{d}s \right\|_{L^2(\mathbb{R}^d)} & \lesssim_{d,\sigma} \|\Phi\|_{L^p_t X^{\sigma,2}},
	\end{align*}
	provided $(p,p',\sigma)$ satisfies 
	\[ \frac{1}{p} + \frac{1}{p'} = 1, \qquad 1 < p = \frac{2}{2-\sigma} < 2.\]
\end{thm}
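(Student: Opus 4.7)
The plan is to organize the proof as a standard $TT^*$ duality argument, using the bilinear bound that the author has already assembled in the paragraph preceding the theorem. First I would introduce the operator $T\colon L^2(\mathbb{R}^d) \to L^{p'}_t X^{-\sigma,2}$ defined by $(T\phi)(t) := U(t)\phi$. Using the unitarity of $U(t)$ on $L^2$ and the natural $L^2$ pairing between $X^{\sigma,2}$ and $X^{-\sigma,2}$ (which follows immediately from the definition \eqref{eq:defxtheta} and $\ell^2$ self-duality), a formal adjoint computation identifies $T^*\Phi = \int_{\mathbb{R}} U(s)^*\Phi(s,\text{---})~\mathrm{d}s$, now viewed as a map $L^p_t X^{\sigma,2} \to L^2(\mathbb{R}^d)$. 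The two inequalities claimed in the theorem are then precisely the boundedness of $T$ and of $T^*$ between these spaces; as dual statements under the chosen pairings, it is enough to establish one of them.

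Second, I would derive the $T^*$ bound directly. Expanding $\|T^*\Phi\|_{L^2}^2$ by Fubini gives
\[ \|T^*\Phi\|_{L^2}^2 = \iint_{\mathbb{R}^2} \left\langle U(t)^*\Phi(t,\text{---}),\, U(s)^*\Phi(s,\text{---})\right\rangle~\mathrm{d}s~\mathrm{d}t,\]
and the right-hand side is already controlled by the time-integrated bilinear inequality displayed immediately before the theorem, which itself was obtained from Lemma \ref{lem:localmass} together with one-dimensional Hardy--Littlewood--Sobolev integration against the kernel $|t-s|^{-\sigma}$. Setting $\Psi = \Phi$ in that inequality therefore gives $\|T^*\Phi\|_{L^2}^2 \lesssim_{d,\sigma} \|\Phi\|_{L^p_t X^{\sigma,2}}^2$, which is exactly the second estimate of the theorem.

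The first estimate then follows by dualizing: for any $\Psi$ of unit norm in $L^p_t X^{\sigma,2}$,
\[ \int_{\mathbb{R}} \langle (T\phi)(t), \Psi(t,\text{---})\rangle~\mathrm{d}t = \langle \phi, T^*\Psi\rangle_{L^2} \leq \|\phi\|_{L^2}\, \|T^*\Psi\|_{L^2} \lesssim_{d,\sigma} \|\phi\|_{L^2}, \]
and taking the supremum over such $\Psi$ yields the $L^{p'}_t X^{-\sigma,2}$ bound on $T\phi$. I expect no serious analytic obstacle: the substantive ingredients (the local dispersive estimate of Lemma \ref{lem:localmass} and its promotion to a time-integrated bilinear inequality via HLS) have already been handled in the surrounding text. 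The only points requiring care are to justify that $X^{\sigma,2}$ and $X^{-\sigma,2}$ genuinely form a dual pair under the $L^2$ inner product --- which follows from the finite overlap of the bumps $\phi_k$ in the definition \eqref{eq:defxtheta} --- and to confirm that the admissible range $\sigma \in (0,1) \cap (0, d/2)$ required by HLS and Lemma \ref{lem:localmass} is consistent with the theorem's hypothesis $1 < p = 2/(2-\sigma) < 2$.
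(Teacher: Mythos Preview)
Your proposal is correct and is precisely the standard $TT^*$ argument that the paper invokes in one line (``So by the $TT^*$ argument we get finally\ldots''): you have simply unpacked the duality step and the expansion of $\|T^*\Phi\|_{L^2}^2$, feeding in the time-integrated bilinear bound already derived from Lemma~\ref{lem:localmass} and Hardy--Littlewood--Sobolev. Your remarks on the $X^{\sigma,2}$--$X^{-\sigma,2}$ duality and on the admissible range of $\sigma$ are the right points of care and match the paper's implicit assumptions.
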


\begin{rmk}
	We can also recover the standard $L^p$ decay estimates from Theorem \ref{thm:displs}, which leads also to a proof of the standard Strichartz inequality. We claim that optimizing Theorem \ref{thm:displs} allows us to show that in fact  
	\begin{equation}\label{eq:stdlpest}
		|t|^{d/2} \|u(t, \text{---})\|_{L^\infty(\mathbb{R}^d)} \leq C \| u(0, \text{---}) \|_{L^1(\mathbb{R}^d)}.
	\end{equation}
	The main idea is to exploit the fact that the $L^\infty$ norm is translation invariant, but not the norm $\left\| |\text{---}|^{d/2} u(0,\text{---}) \right\|_{L^2}$. Denoting by $\tau_y$ the translation operator
	\[ \tau_y f(x) = f(x + y), \]
	we can optimize Theorem \ref{thm:displs} to read 
	\[ |t|^{d/2} \|u(t,\text{---})\|_{L^\infty(\mathbb{R}^d)} \leq C \inf_{y\in \mathbb{R}^d} \left\| \tau_y u(0,\text{---})\right\|_{X^{d/2,1}}.\]
	Applying this estimate to $u(0,x)$ being the characteristic function of any cube in $\mathbb{R}^d$, we note that the infimum is bounded above by the case when the translation brings the center of the cube to the origin, in which case a direct computation yield that 
	\begin{equation}\label{eq:estforcubes}
		 \inf_{y\in \mathbb{R}^d} \left\| \tau_y u(0,\text{---})\right\|_{X^{d/2,1}} \leq C \|u(0, \text{---})\|_{L^1(\mathbb{R}^d)}.
	\end{equation}
	Finally, by linearity of the equation we can approximate arbitrary initial data by simple functions, and use the uniform bound \eqref{eq:estforcubes} for cubes to conclude that \eqref{eq:stdlpest} holds. 
\end{rmk}

\section{Airy equation}\label{sec:airy}
We finish our exposition with a discussion of some partial progress for the Airy equation
\begin{equation}\label{eq:airy}
	\partial_t u - \partial^3_{xxx} u = 0
\end{equation}
where $u:\mathbb{R}\times\mathbb{R}\to \mathbb{R}$. As we have seen previously, the classical analogue of this equation fails to exhibit any decay. On the other hand, by oscillatory integral techniques it is known that solutions enjoy a decay estimate of the form \cite{PrinAnal4}*{Chapter 8}
\begin{equation}\label{eq:stdairydecay}
	|t|^{\frac13} |u(t,x)| \leq C \|u(0,\text{---})\|_{L^1(\mathbb{R})}.
\end{equation}
The question is: can decay for the Airy equation be proven using commuting-vector-field techniques? Here, we show how to recover some decay estimates using only commuting differential operators. 

Returning to the proof of Theorem \ref{thm:degenerate}, we see that it is possible to construct classical commuting vector fields by using analogues of Galilean symmetry and the dispersion relation function $w$. The quantum analogue (which we have already used in studying the Schr\"odinger equation) has an easy interpretation. Let $P$ denote some real polynomial and consider the equation
\[ i\partial_t u +  P(i\partial_x) u = 0.\]
Taking the formal space-time Fourier transform we expect solutions to be Fourier transforms of measures supported on the surface
\[ \Sigma_P := \{\tau + P(\xi) = 0 \}. \]
Now let $V$ be any vector field in on Fourier space that is tangent to $\Sigma_P$, then $V(\tilde{u})$ is, at least formally, another measure supported on $\Sigma_P$, and hence the operator corresponding to acting by $V$ on the Fourier side is expected to commute with the evolution equation. 

\begin{rmk}
	This same idea has been previously used by Chen and Zhou to derive decay estimates for hyperbolic systems via \emph{pseudodifferential} commutators \cite{FourierVectorField}. More recently  Donninger and Krieger studied equations with potential via a distorted Fourier transform, and proved decay estimates using operators build also from vector fields on the distorted Fourier side.
\end{rmk}

In the case $P(z) = z^2$, we have Schr\"odinger's equation. The differential of the defining function of $\Sigma_P$ is $\mathrm{d}\tau + 2 \xi \mathrm{d}\xi$, and hence the vector field $2\xi \partial_\tau - \partial_\xi$ is tangent to $\Sigma_P$. Taking the Fourier transform we have that this corresponds to the operator $2t \partial_x + i x$ which we used to prove Theorem \ref{thm:displs}. 

The Airy equation \eqref{eq:airy} corresponds to $P(z) = z^3$. The same procedure yields the tangent vector field $3 \xi^2 \partial_\tau - \partial_\xi$ in Fourier space, which corresponds to the differential operator 
\begin{equation}
	W:= 3 t \partial^2_{xx} +  x
\end{equation}
on the physical side, which we can check to indeed commute with \eqref{eq:airy}. With this operator we can prove a space-time weighted $L^\infty$ estimate as follows. Observe that
\begin{align*}
	 \frac32 t [\partial_x u(t,x)]^2 &= \int_{-\infty}^x 3 \partial_x u(t,x') \partial^2_{xx} u(t,x') ~\mathrm{d}x' \\
	 & = \int_{-\infty}^x  \partial_x u(t,x') Wu(t,x') ~\mathrm{d}x' - \int_{-\infty}^x x' u(t,x') \partial_x u(t,x') ~\mathrm{d}x' \\
	 &= \int_{-\infty}^x  \partial_x u(t,x') Wu(t,x') ~\mathrm{d}x' -  \frac12 \int_{-\infty}^x x' \partial_x [u(t,x')]^2 ~\mathrm{d}x' \\
	 &= \int_{-\infty}^x  \partial_x u(t,x') Wu(t,x') ~\mathrm{d}x' -  \frac12 x u(t,x)^2 + \frac12 \int_{-\infty}^x [u(t,x')]^2 ~\mathrm{d}x'.
\end{align*}
Reorganize the terms we obtain
\begin{equation}\label{eq:airypre}
\begin{aligned}
	3t [ \partial_x u(t,x)]^2 + x [u(t,x)]^2 & = 2\int_{-\infty}^x  \partial_x u(t,x') Wu(t,x') ~\mathrm{d}x'  + \int_{-\infty}^x [u(t,x')]^2 ~\mathrm{d}x'\\
& \leq 2\|\partial_x u(t,\text{---})\|_{L^2} \|Wu(t,\text{---})\|_{L^2} + \|u(t,\text{---})\|_{L^2}^2.
\end{aligned}
\end{equation}
The terms to the right of the inequality are all conserved in time, due to the $L^2$ conservation property of the Airy equation. In other words, we have proven 
\begin{prop}\label{prop:airydisp1}
	If $u$ solves \eqref{eq:airy}, then 
	\[ 3t [ \partial_x u(t,x)]^2 + x [u(t,x)]^2 \leq 2\|\partial_x u(0,\text{---})\|_{L^2} \|xu(0,\text{---})\|_{L^2} + \|u(0,\text{---})\|_{L^2}^2.\]
\end{prop}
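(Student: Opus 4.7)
The plan is to observe that the computation leading to \eqref{eq:airypre} has already done the heavy lifting: it bounds the left-hand side of the claim by quantities at time $t$, namely $\|u(t,\text{---})\|_{L^2}$, $\|\partial_x u(t,\text{---})\|_{L^2}$, and $\|Wu(t,\text{---})\|_{L^2}$. What remains is to propagate each of these back to $t=0$ via a conservation law, and then note that at $t=0$ the operator $W = 3t\partial^2_{xx} + x$ reduces to multiplication by $x$, so that $\|Wu(0,\text{---})\|_{L^2} = \|xu(0,\text{---})\|_{L^2}$.

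Accordingly, the three conservation laws I would establish are: (i) the standard $L^2$ conservation $\|u(t,\text{---})\|_{L^2} = \|u(0,\text{---})\|_{L^2}$, obtained by pairing \eqref{eq:airy} with $u$ and integrating by parts; (ii) conservation of $\|\partial_x u(t,\text{---})\|_{L^2}$, which follows because $\partial_x$ trivially commutes with $L := \partial_t - \partial^3_{xxx}$, so $\partial_x u$ is itself a solution of \eqref{eq:airy}; and (iii) conservation of $\|Wu(t,\text{---})\|_{L^2}$, which hinges on the commutator check
\[
[L, W] = [\partial_t, 3t\partial^2_{xx}] + [-\partial^3_{xxx}, x] = 3\partial^2_{xx} - 3\partial^2_{xx} = 0,
\]
already foreshadowed in the discussion of $W$ above the proposition. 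Thus $Wu$ is again a solution of \eqref{eq:airy}, its $L^2$ norm is conserved, and substituting the three identities into \eqref{eq:airypre} yields the stated inequality.

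The main technical point I expect to need care with is the justification of boundary terms. The derivation of \eqref{eq:airypre} integrates from $-\infty$ up to $x$ and so implicitly assumes that $u$ and $\partial_x u$ decay at $-\infty$; similarly the conservation-law integrations by parts require decay at both ends. For Schwartz-class solutions these terms vanish identically, so the clean path is to prove the proposition in that class first and then extend by density, using that the right-hand side of the claimed inequality controls the relevant norms of the approximating sequence uniformly.
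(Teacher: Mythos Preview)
Your proposal is correct and follows essentially the same route as the paper: invoke the already-derived inequality \eqref{eq:airypre}, then replace each time-$t$ norm on the right by its time-$0$ value using $L^2$ conservation for the solutions $u$, $\partial_x u$, and $Wu$, and finally observe that $W|_{t=0}$ is multiplication by $x$. The paper states this a bit more tersely (it simply says the right-hand side is conserved ``due to the $L^2$ conservation property of the Airy equation''), whereas you spell out the commutator $[L,W]=0$ and the boundary-term caveat explicitly, but the argument is the same.
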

The above proposition is a far-cry from the estimate \eqref{eq:stdairydecay}. In fact, we are not able to recover exactly the standard decay estimate \eqref{eq:stdairydecay}; below we will show how to get a similar local energy decay statement with the correct scaling. 
But first, let us examine some properties of Proposition \ref{prop:airydisp1}. 
Using the conservation of $L^2$ we can easily obtain \emph{uniform boundedness} 
\[ |u(t,x)|^2 \lesssim \|\partial_x u(0,\text{---})\|_{L^2} \|u(0,\text{---})\|_{L^2}.\]
And hence for any $x_0$, Proposition \ref{prop:airydisp1} implies \emph{uniform} decay in forward time of $|\partial_x u(t,x)|$ for $x \geq x_0$, with rate $t^{1/2}$. This can be explained heuristically by the fact that, since $\partial_{xx}^2$ is a negative operator on $L^2$, we expect (drawing connection to the classical picture) that the corresponding wave-packets for the Airy equation should move to the left, and hence pointwise decay on any right half line should be easier to prove. 
The decay rate of $t^{1/2}$ is correct, in terms of scaling, based on \eqref{eq:stdairydecay}. By using the fundamental solution one can obtain the following decay estimates for the Airy equation:
\[ |\partial_x u(t,x)| \lesssim |t|^{-1/3} \|\partial_x u(0,\text{---})\|_{L^1}, \qquad |\partial_x u(t,x)| \lesssim |t|^{-2/3} \|u(0,\text{---})\|_{L^1}.\]
From these we obtain the interpolated estimate
\[ |\partial_x u(t,x)|^2 \lesssim |t|^{-1} \|\partial_x u(0,\text{---})\|_{L^1}\| u(0,\text{---})\|_{L^1} \]
the right hand side of which have the same scaling as the right hand side which appears in Proposition \ref{prop:airydisp1}. Multiplying this inequality by $\langle x\rangle^{-1-2\epsilon}$ and integrating, we obtain as a consequence the local energy decay 
\[ |t| \, \| \langle \text{---} \rangle^{-\frac12 - \epsilon} \partial_x u(t,\text{---}) \|_{L^2}^2 \lesssim  \|\partial_x u(0,\text{---})\|_{L^1}\| u(0,\text{---})\|_{L^1}.\]
A similar estimate (with the same scaling) can be derived as a consequence of Proposition \ref{prop:airydisp1}.
\begin{cor}
	If $u$ solves \eqref{eq:airy}, then 
	\[ |t|\, \|\langle\text{---}\rangle^{-\frac12-\epsilon} \partial_x u(t,\text{---})\|_{L^2}^2 \lesssim_{\epsilon} \|\partial_x u(0,\text{---})\|_{L^2} \|\langle\text{---}\rangle u(0,\text{---})\|_{L^2} + \|u(0,\text{---})\|_{L^2}^2.\]
\end{cor}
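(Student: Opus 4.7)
The plan is to integrate the pointwise bound of Proposition \ref{prop:airydisp1} against the weight $\langle x\rangle^{-1-2\epsilon}$, extracting $|t|(\partial_x u)^2$ on the left while absorbing the sign-indefinite term $xu^2$ on the right by $L^2$-mass conservation. The whole argument is a few lines, and I do not expect any serious obstacle.

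First I would promote Proposition \ref{prop:airydisp1} to a two-sided bound so that $|t|$ rather than $t$ can be isolated on the left. The Airy equation is invariant under $(t,x)\mapsto (-t,-x)$, so $v(t,x):=u(-t,-x)$ is also a solution with the same relevant data norms. Applying the proposition to $v$ and changing variables yields the lower bound complementary to the upper bound already in hand, giving
\[ \bigl|3t\,[\partial_x u(t,x)]^2 + x\,[u(t,x)]^2\bigr| \leq A, \]
with $A := 2\|\partial_x u(0,\text{---})\|_{L^2}\|xu(0,\text{---})\|_{L^2} + \|u(0,\text{---})\|_{L^2}^2$. Rearranging with the triangle inequality then produces the sign-free pointwise estimate
\[ 3|t|\,[\partial_x u(t,x)]^2 \leq A + |x|\,[u(t,x)]^2 \]
valid for every $(t,x)\in\mathbb{R}\times\mathbb{R}$.

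Second, I multiply by $\langle x\rangle^{-1-2\epsilon}$ and integrate in $x$. The $A$-term contributes $A\int_{\mathbb{R}}\langle x\rangle^{-1-2\epsilon}\,\mathrm{d}x = C_\epsilon A$, which is finite precisely because $\epsilon > 0$. For the remaining term I use $|x|\,\langle x\rangle^{-1-2\epsilon} \leq \langle x\rangle^{-2\epsilon} \leq 1$ together with mass conservation for the Airy equation to bound
\[ \int_{\mathbb{R}} \langle x\rangle^{-1-2\epsilon} |x|\,[u(t,x)]^2\,\mathrm{d}x \leq \|u(t,\text{---})\|_{L^2}^2 = \|u(0,\text{---})\|_{L^2}^2. \]
Combining the two contributions and noting $\|xu(0,\text{---})\|_{L^2}\leq \|\langle x\rangle u(0,\text{---})\|_{L^2}$ delivers the asserted inequality.

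The only minor subtlety is the two-sided strengthening of Proposition \ref{prop:airydisp1}. As an alternative to the symmetry argument above, one can simply repeat its integration-by-parts proof but integrate from $x$ to $+\infty$ rather than from $-\infty$ to $x$; the boundary terms flip sign and the resulting identity yields the matching lower bound directly. Neither route requires any new idea beyond Proposition \ref{prop:airydisp1} itself.
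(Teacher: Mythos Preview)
Your proof is correct and follows essentially the same route as the paper: multiply the pointwise inequality of Proposition~\ref{prop:airydisp1} by the integrable weight $\langle x\rangle^{-1-2\epsilon}$, integrate in $x$, and control the sign-indefinite $x\,u^2$ contribution via $|x|\langle x\rangle^{-1-2\epsilon}\leq 1$ together with $L^2$ mass conservation. The only addition is that you make explicit, via the time-reversal symmetry $(t,x)\mapsto(-t,-x)$ (or equivalently by integrating from $x$ to $+\infty$), the two-sided bound needed to obtain $|t|$ rather than $t$ on the left, a point the paper's proof leaves implicit.
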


\begin{proof}
	Multiply the inequality in Proposition \ref{prop:airydisp1} by $\langle x\rangle^{-1-2\epsilon}$ and integrate in $x$, noting that the weight is integrable. The proof concludes by noting that 
	\[ \left| \int_{\mathbb{R}} \frac{x}{\langle x\rangle^{-1-2\epsilon}} |u(t,x)|^2 ~\mathrm{d}x\right| \leq \|u(t,\text{---})\|_{L^2}^2 = \|u(0,\text{---})\|_{L^2}^2. \]
\end{proof}

\begin{rmk}
	We are able to obtain a correctly-scaled local-energy decay estimate for $\partial_x u$. It remains open whether a correctly-scaled decay estimate for $u$ itself is possible using a commuting vector field approach. If one takes the point of view as above where the commuting linear operators used correspond to tangential vector fields on the Fourier side, then the answer seems to be in the negative. This is based on the fact that tangential vector fields on the Fourier side whose Fourier transforms are differential operators can have only weights in integer powers of $t$. Coupled with $L^2$ based conservation laws this suggests that only estimates with decay in the order of $t^{-k/2}$ where $k$ is an integer is possible via this technique. Hence our result for the Airy equation which scales the same as the decay estimate that is interpolated between $L^1$---$W^{1,\infty}$ decay and $W^{1,1}$---$W^{1,\infty}$ decay. 

	For another example, one can also consider equations of the form 
	\[ i\partial_t u + \partial^{2k}_{xx\cdots x} u = 0.\]
	Fourier techniques give decay rates of the form 
	\[ |t|^{1/2k} \|u(t,\text{---})\|_{L^\infty} \lesssim \|u(0,\text{---})\|_{L^1}.\]
	Running the same argument essentially as in the case of Section \ref{sec:sch} with the linear operator 
	\[ W = 2k t \partial^{2k-1}_{xx\cdots x} \pm i x \]
	we obtain a (correctly scaled) estimate of the form
	\[ t |\partial^{2k-2}_{xx\cdots x} u(t,x)|^2 \lesssim \|\partial^{2k-2}_{xx\cdots x} u(t,x)\|_{L^2} \| x u\|_{L^2}.\]

	It remains conceivable that estimates of the lower-order derivative terms can be achieved by a commuting-operator approach. For that to hold, however, one would likely need to allow the commuting operator to be pseudo-differential on one or both of the physical and Fourier sides. And this, in a way, defeats the purpose of this exercise. 
\end{rmk}

\begin{bibdiv}
\begin{biblist}

	\bib{Interpolation}{book}{
		title={Interpolation Spaces},
		subtitle={An Introduction},
		author={Bergh, J\"oran},
		author={L\"ofstr\"om, J\"orgen},
		publisher={Springer-Verlag},
		address={New York},
		series={Grundlehren der mathematischen Wissenschaften},
		volume={223},
		date={1976}
	}
	
	\bib{FourierVectorField}{article}{
		title={Decay rate of solutions to hyperbolic system of first order},
		author={Chen, Shuxing},
		author={Zhou, Yi},
		journal={Acta Math. Sin. (Engl. Ser.)},
		date={1999},
		volume={15},
		pages={471--484}
	}

	\bib{Distorted}{article}{
		title={A vector field on the distorted Fourier side and decay for wave equations with potentials},
		author={Donninger, Roland},
		author={Krieger, Joachim},
		journal={Mem. Amer. Math. Soc.},
		volume={241},
		date={2016},
		number={1142}
	}

\bib{VectorfieldVlasov}{article}{
	title={A vector field method for relativistic transport equations with applications},
	author={Fajman, David},
	author={Joudioux, J\'er\'emie},
	author={Smulevici, Jacques},
	date={2015},
	eprint={arXiv:1510.04939}
}

	\bib{KeelTao}{article}{
		title={Endpoint Strichartz Estimate},
		author={Keel, Markus},
		author={Tao, Terence},
		journal={Amer. J. Math.},
		date={1998},
		volume={120},
		pages={955--980}
	}

\bib{KlainermanSobolev}{article}{
	title={Uniform decay estimates and the {L}orentz invariance of the classical wave equation},
	author={Klainerman, Sergiu},
	date={1985},
	journal={Comm. Pure Appl. Math.},
	volume={38},
	pages={321--332}
}

\bib{KlainermanKG}{article}{
	title={Global existence of small amplitude solutions to nonlinear {K}lein-{G}ordon equations in four space-time dimensions},
	author={Klainerman, Sergiu},
	date={1985},
	journal={Comm. Pure Appl. Math.},
	volume={38},
	pages={631--641}
}

\bib{KlainermanCommuting}{article}{
	title={A commuting vectorfields approach to {S}trichartz-type inequalities and applications to quasi-linear wave equations},
	author={Klainerman, Sergiu},
	date={2001},
	journal={Int. Math. Res. Not. IMRN},
	volume={2001},
	pages={221--274}
}

\bib{PhysSpace}{article}{
	author={Klainerman, Sergiu},
   author={Rodnianski, Igor},
   author={Tao, Terence},
   title={A physical space approach to wave equation bilinear estimates},
   note={Dedicated to the memory of Thomas H. Wolff},
   journal={J. Anal. Math.},
   volume={87},
   date={2002},
   pages={299--336}
}

	\bib{Hyperboloidal}{book}{
		title={The Hyperboloidal Foliation Method},
		author={LeFloch, Philippe G.},
		author={Ma, Yue},
		date={2015},
		publisher={World Scientific},
		address={Hackensack, NJ},
		series={Series in Applied and Computational Mathematics},
		volume={2}
	}

	\bib{SchlagMusculu}{book}{
		title={Classical and Multilinear Harmonic Analysis, Volume 1},
		author={Muscalu, Camil},
		author={Schlag, Wilhelm},
		date={2013},
		publisher={Cambridge University Press},
		address={New York}
	}

\bib{PlanchonVega}{article}{
	author={Planchon, Fabrice},
	author={Vega, Luis},
	   title={Bilinear virial identities and applications},
	   language={English, with English and French summaries},
	   journal={Ann. Sci. \'Ec. Norm. Sup\'er. (4)},
	   volume={42},
	   date={2009},
	   number={no.~2},
	   pages={261--290}
}

\bib{JacquesVlasov}{article}{
	author={Smulevici, Jacques},
	title={Small data solutions of the Vlasov-Poisson system and the vector field method},
	journal={Annals of PDE},
	volume={2},
	date={2016},
	eprint={arXiv:1504.02195}
}

	\bib{Stein3}{book}{
		title={Harmonic Analysis},
		author={Stein, Elias M.},
		date={1995},
		publisher={Princeton University Press},
		address={Princeton, NJ},
		subtitle={Real-Variable Methods, Orthogonality, and Oscillatory Integrals}
	}

\bib{PrinAnal4}{book}{
	title={Functional Analysis}, 
	author={Stein, Elias M.},
	author={Shakarchi, Rami},
	date={2011},
	publisher={Princeton University Press},
	address={Princeton, NJ},
	volume={4},
	series={Princeton Lectures in Analysis},
	subtitle={Introduction to Further Topics in Analysis}
}

\bib{TaoUnp}{article}{
	title={A physical space proof of the bilinear Strichartz and local smoothing estimates for the Schrodinger equation},
	author={Tao, Terence},
	date={2013},
	status={unpublished},
	eprint={https://terrytao.wordpress.com/2013/07/10/}
}

\bib{QianPP}{article}{
	title={An intrinsic hyperboloid approach for Einstein Klein-Gordon equations},
	author={Wang, Qian},
	date={2016},
	status={preprint},
	eprint={arXiv:1607.01466}
}

\end{biblist}
\end{bibdiv}

\end{document}